\documentclass[11pt, a4paper, english]{amsart}
\usepackage{amsmath}
\pdfoutput=1
\usepackage{amsthm} 
\usepackage{amssymb} 
\usepackage[dvipsnames]{xcolor}
\usepackage{amscd} 
\usepackage{mathtools}

\usepackage{subcaption}

\usepackage{array} 
\usepackage[cal=boondoxo,scr=euler]{mathalfa}
\usepackage[backref=page,linktocpage]{hyperref} 
\usepackage{cleveref} 
\usepackage{caption} 
\usepackage{graphics,graphicx} 
\usepackage{tikz,tikz-cd} 

\usetikzlibrary{graphs,graphs.standard,calc}

\usetikzlibrary{decorations.pathreplacing,}

\usepackage{enumerate} 

\DeclareMathAlphabet{\mathsf}{OT1}{\sfdefault}{m}{n}

\newcommand{\nocontentsline}[3]{}
\newcommand{\tocless}[2]{\bgroup\let\addcontentsline=\nocontentsline#1{#2}\egroup}

\usepackage[margin=1.25in]{geometry}

\linespread{1.06}

\usepackage{verbatim}

\usepackage{scalerel}

\makeatletter
\def\dual#1{\expandafter\dual@aux#1\@nil}
\def\dual@aux#1/#2\@nil{\begin{tabular}{@{}c@{}}#1\\#2\end{tabular}}
\makeatother

\makeatletter
\@namedef{subjclassname@2020}{\textup{2020} Mathematics Subject Classification}
\makeatother

\newcommand{\stirlingtwo}[2]{\biggl\{\genfrac{}{}{0pt}{}{#1}{#2}\biggr\}}
\newcommand{\tstirlingtwo}[2]{\left\{\genfrac{}{}{0pt}{}{#1}{#2}\right\}}

\DeclareMathAlphabet{\amathbb}{U}{bbold}{m}{n}

\hypersetup{
    colorlinks = true,
    linkbordercolor = {white},
    linkcolor = {BrickRed},
    anchorcolor = {black},
    citecolor = {BrickRed},
    filecolor = {cyan},
    menucolor = {BrickRed},
    runcolor = {cyan},
    urlcolor = {BrickRed}
}

\usetikzlibrary{automata}

\newtheoremstyle{teoremas}
{12pt}
{12pt}
{\itshape}
{}
{\bfseries}
{}
{.5em}
{}

\theoremstyle{teoremas}
\newtheorem{theorem}{Theorem}[section]
\newtheorem{corollary}[theorem]{Corollary}

\newtheorem{proposition}[theorem]{Proposition}

\newtheoremstyle{definition}
{12pt}
{12pt}
{}
{}
{\bfseries}
{}
{.5em}
{}

\theoremstyle{definition}
\newtheorem{definition}[theorem]{Definition}

\newtheorem{example}[theorem]{Example}
\newtheorem{remark}[theorem]{Remark}

\crefname{theorem}{theorem}{theorems}
\Crefname{theorem}{Theorem}{Theorems}
\crefname{lemma}{lemma}{lemmas}
\Crefname{lemma}{Lemma}{Lemmas}
\crefname{proposition}{proposition}{propositions}
\Crefname{proposition}{Proposition}{Propositions}

\DeclareMathOperator{\rk}{rk}

\newcommand{\M}{\mathsf{M}}

\newcommand{\U}{\mathsf{U}}

\newcommand{\K}{\mathsf{K}}

\newcommand{\Ind}{\operatorname{Ind}}

\newcommand{\VRep}{\operatorname{VRep}}

\newcommand{\simple}{\operatorname{simp}}
\newcommand{\loopless}{\operatorname{loop}}

\AtBeginDocument{
   \def\MR#1{}
}

\title{Kazhdan--Lusztig polynomials of braid matroids}

\author[L. Ferroni]{Luis Ferroni}

\address{(L. Ferroni)
  Department of Mathematics, KTH Royal Institute of Technology, Stockholm, Sweden
}
\email{ferroni@kth.se}

\author[M. Larson]{Matt Larson}
\address{(M. Larson)
Department of Mathematics, Stanford University, Stanford, USA
}
\email{mwlarson@stanford.edu}

\subjclass[2020]{Primary: 05B35, 05A15, 55N33, 20C30, 13D40}

\allowdisplaybreaks

\begin{document}

\begin{abstract}
    We provide a combinatorial interpretation of the Kazhdan--Lusztig polynomial of the matroid arising from the braid arrangement of type $\mathrm{A}_{n-1}$, which gives an interpretation of the intersection cohomology Betti numbers of the reciprocal plane of the braid arrangement. 
    Moreover, we prove an equivariant version of this result. The key combinatorial object is a class of matroids arising from series-parallel networks. As a consequence, we prove a conjecture of Elias, Proudfoot, and Wakefield on the top coefficient of Kazhdan--Lusztig polynomials of braid matroids, and we provide explicit generating functions for their Kazhdan--Lusztig and $Z$-polynomials.
\end{abstract}

\maketitle

\section{Introduction}

\subsection{Overview}

Since their introduction in 1979 \cite{kazhdan-lusztig}, Kazhdan--Lusztig polynomials of Bruhat intervals in Coxeter groups have been extensively studied under various lenses. They play a central role in representation theory, algebraic geometry, and combinatorics. A remarkable feature of the theory of Kazhdan--Lusztig polynomials is that it can be generalized to explain deep phenomena in several areas of mathematics. One far-reaching extension has been proposed by Stanley in \cite{stanley92}, giving rise to what is now known as Kazhdan--Lusztig--Stanley (KLS) polynomials. We refer to \cite{proudfoot} for a detailed survey. Beyond the aforementioned case of Bruhat intervals in Coxeter groups, there are other important families of polynomials with integer coefficients that are encompassed by KLS theory. Notably, the polynomial having as coefficients the $g$-vector of a simplicial polytope is a KLS polynomial. The famous $g$-theorem for simplicial polytopes, established by Billera--Lee \cite{billera-lee} and Stanley \cite{stanley} gives inequalities that the coefficients of the $g$-polynomial must satisfy.

We consider another instance of KLS polynomials which arises from matroids. Matroids can be viewed as a simultaneous abstraction of two pervasive notions in mathematics and computer science:  linear independence and the greedy algorithm. The study of Kazhdan--Lusztig polynomials of matroids was initiated in \cite{elias-proudfoot-wakefield} by Elias, Proudfoot, and Wakefield. This theory played a key role in the resolution of the Dowling--Wilson conjecture by Braden, Huh, Matherne, Proudfoot, and Wang \cite{braden-huh-matherne-proudfoot-wang2}. 

In the three aforementioned cases, i.e., Bruhat intervals in Coxeter groups, simplicial polytopes, and matroids, the KLS polynomials can be defined by relatively simple recursions. The resulting polynomials have non-negative coefficients, but the respective proofs involve the use of heavy machinery from Hodge theory. For example, the non-negativity for $g$-polynomials of simplicial polytopes follows from an application of the Hard Lefschetz theorem \cite{stanley}; in the Coxeter setting, the proof by Elias and Williamson establishes a version of Hodge theory for Soergel bimodules \cite{elias-williamson}; whereas in the matroid case, the central ingredient is a version of Hodge theory for a variant of intersection cohomology for matroids by Braden, Huh, Matherne, Proudfoot, and Wang \cite{braden-huh-matherne-proudfoot-wang2}.

Putting the three proofs into perspective, one may view the non-negativity of these three families of polynomials as a consequence of the fact that they are Hilbert--Poincar\'e polynomials. In other words, the non-negativity of the coefficients follows from the fact that they are the dimensions of suitable vector spaces. However, it remains an outstanding 
and broadly open problem to find \emph{combinatorial interpretations} for their coefficients. 

The most basic examples of matroids arise from cycle matroids of graphs. The case of the complete graph $\mathsf{K}_n$ on $n$ vertices has been central to various key developments in the theory. The cycle matroids of complete graphs are called \emph{braid matroids}. The pursuit of formulas and algorithms for computing the Kazhdan--Lusztig polynomials of braid matroids eventually led to the introduction of other objects, some of which now lie at the core of the \emph{singular} Hodge theory of matroids, developed in \cite{braden-huh-matherne-proudfoot-wang2}. 

The lattice of flats of the braid matroid $\mathsf{K}_n$ is the partition lattice $\Pi_n$, i.e., the poset of partitions of the set $\{1,\ldots,n\}$ ordered by coarsening. It is well known that partition lattices are tightly related to various objects appearing in other areas of mathematics. For example, the strata in $\overline{M}_{0,n+1}$, the moduli spaces of complex projective lines with $n+1$ marked points, are indexed by certain chains in $\Pi_n$. 

The central result of this paper provides a combinatorial interpretation for the coefficients of the Kazhdan--Lusztig polynomials of all braid matroids. Moreover, we will provide an interpretation for a related object, called the $Z$-polynomial. The interpretation we provide is surprisingly concrete: both the coefficients of the Kazhdan--Lusztig polynomials and the $Z$-polynomials of braid matroids count matroids that arise from series-parallel networks. Furthermore, our combinatorial description of the coefficients yields an interpretation of much finer invariants than the Kazhdan--Lusztig and $Z$-polynomials, which take into account the symmetries of the matroid (the \emph{equivariant} Kazhdan--Lusztig and $Z$-polynomials).

This has a number of relevant applications, such as the possibility of writing down  explicit (but complicated) generating functions, which allow one to compute these polynomials explicitly, circumventing their defining recursion. In addition, our main result allows us to settle a conjecture by Elias, Proudfoot, and Wakefield which asserted a mysterious formula for the leading coefficient of the Kazhdan--Lusztig polynomial of $\mathsf{K}_{2n}$. Some other consequences requiring a more technical discussion are also included below.

\subsection{Kazhdan--Lusztig polynomials of matroids} 

As mentioned earlier, the main objects introduced by Elias, Proudfoot, and Wakefield in \cite{elias-proudfoot-wakefield} are the Kazhdan--Lusztig polynomials of matroids. 
To state their precise definition we will follow an alternative approach by Proudfoot, Xu, and Young \cite{PXY}, and Braden and Vysogorets \cite{braden-vysogorets}. Precisely, they show that there is a unique way to associate to every loopless matroid $\M$ a polynomial $P_{\M}(t) \in \mathbb{Z}[t]$ satisfying the following properties:
\begin{enumerate}[\normalfont(i)]
    \item If $\rk(\M)=0$, then $P_{\M}(t) = 1$.
    \item If $\rk(\M)>0$, then $\deg P_{\M}(t) < \frac{1}{2}\rk(\M)$.
    \item For every matroid $\M$, the polynomial
    \[Z_{\M}(t) = \sum_{F \in \mathcal{L}(\M)} t^{\rk(F)} P_{\M/F}(t)\]
    is palindromic\footnote{A polynomial $f(t)$ is \emph{palindromic} if $f(t) = t^{d}f(t^{-1})$, where $d=\deg f$.} of degree $\rk(\M)$. 
\end{enumerate}
Here $\mathcal{L}(\M)$ is the lattice of flats of $\M$. The polynomial $P_{\M}(t)$ is called the \emph{Kazhdan--Lusztig polynomial} of $\M$, and the polynomial $Z_{\M}(t)$ is called the \emph{Z-polynomial} of $\M$. It is not difficult to prove that the three properties above uniquely specify $P_{\M}(t)$ and $Z_{\M}(t)$.

Kazhdan--Lusztig polynomials and $Z$-polynomials of matroids display remarkable properties. For example, the coefficients of the Kazhdan--Lusztig polynomial and $Z$-polynomial of a matroid are non-negative, and the coefficients of the $Z$-polynomial are unimodal \cite[Theorem~1.2]{braden-huh-matherne-proudfoot-wang2}. The lattice of flats of $\M$ is modular if and only if $P_{\M}(t)=1$ \cite[Proposition~2.14]{elias-proudfoot-wakefield}. Kazhdan--Lusztig polynomials and $Z$-polynomials were conjectured in \cite[Conjecture~3.2]{gedeon-proudfoot-young-survey} and \cite[Conjecture~5.1]{PXY} to be real rooted. Much work in the literature is devoted to the study of these polynomials for specific classes of matroids, such as uniform matroids \cite{gao-uniform,gao-xie,qniform}, matroids of corank $2$ \cite{ferroni-schroter}, sparse paving \cite{LNR,ferroni-vecchi}, and paving matroids \cite{ferroni-nasr-vecchi}.

When a matroid $\M$ is realized by a linear subspace $L \subseteq \mathbb{C}^n$, the Kazhdan--Lusztig polynomial and $Z$-polynomial of $\M$ admit algebro-geometric interpretations. The $Z$-polynomial of $\M$ is the Poincar\'{e} polynomial of the intersection cohomology of the closure of $L$ in $(\mathbb{P}^1)^n$, and the Kazhdan--Lusztig polynomial is the Poincar\'{e} polynomial of the stalk of the intersection cohomology at the point $(\infty, \dotsc, \infty)$ of $(\mathbb{P}^1)^n$. Equivalently, the Kazhdan--Lusztig polynomial is the Poincar\'{e} polynomial of the intersection cohomology of the (affine) \emph{reciprocal plane} of $L$: the closure of the image of $L$ under the Cremona transform on $\mathbb{A}^{n}$. This proves the non-negativity of the coefficients when $\M$ is realizable; the non-negativity for arbitrary matroids was proved by Braden, Huh, Matherne, Proudfoot, and Wang \cite{braden-huh-matherne-proudfoot-wang2} by combinatorializing this description of the Kazhdan--Lusztig polynomial and $Z$-polynomial. 

When the action of a subgroup $\Gamma$ of $\mathfrak{S}_n$ preserves $L \subseteq \mathbb{C}^n$, there is an action of $\Gamma$ on the intersection cohomology of the closure of $L$ in $(\mathbb{P}^1)^n$ and on the intersection cohomology of the reciprocal plane of $L$. By taking the $\Gamma$-equivariant Poincar\'{e} polynomial, we may then construct versions of the Kazhdan--Lusztig polynomial and $Z$-polynomial which have coefficients in the ring of virtual representations of $\Gamma$, $\operatorname{VRep}(\Gamma)$. This can be extended to all loopless matroids with an action of a finite group:     There is a unique way of associating to each action $\Gamma \curvearrowright \M$ of a group $\Gamma$ on a loopless matroid\footnote{A group action on a matroid is an action on the ground set which sends flats to flats.} $\M$ a polynomial $P_\M^\Gamma(t) \in \VRep(\Gamma)[t]$ in such a way that
\begin{enumerate}[\normalfont(i)]
    \item If $\rk(\M) = 0$, then $P_\M^{\Gamma}(t) = \operatorname{tr}$ is the trivial representation.
    \item If $\rk(\M) > 0$, then $\deg P_\M^\Gamma(t) < \frac{1}{2}\rk(\M)$.
    \item For every $\M$, the polynomial
    \[Z_\M^\Gamma(t) = \sum_{[F] \in \mathcal{L}(\M)/\Gamma} t^{\rk(F)} \Ind_{\Gamma_F}^\Gamma P_{\M/F}^{\Gamma_F}(t)\]
    is palindromic of degree $\rk(\M)$.
\end{enumerate}
Here, $\Gamma_F$ is the stabilizer of $F$ and $\mathcal{L}(\M)/\Gamma$ denotes the quotient of the lattice of flats of $\M$ by the action of $\Gamma$. The polynomial $P^{\Gamma}_{\M}(t)$ is called the \emph{equivariant Kazhdan--Lusztig polynomial}, and $Z^{\Gamma}_{\M}(t)$ is called the \emph{equivariant $Z$-polynomial}. See \cite{gedeon-proudfoot-young,Equivariant}. 

\subsection{The Kazhdan--Lusztig theory of braid matroids}

As mentioned above, the $n$-th braid matroid $\K_n$ is the graphic matroid associated to the complete graph on $n$ vertices; as such, the symmetric group $\mathfrak{S}_n$ acts on $\K_n$ by permuting the vertices of the complete graph. The $n$-th braid matroid has rank $n-1$. The $n$-th braid matroid is realized by the \emph{braid arrangement} of hyperplanes perpendicular to the roots of type $\mathrm{A}_{n-1}$. 

Kazhdan--Lusztig polynomials of braid matroids have been extensively studied, both equivariantly and non-equivariantly. For other graphic matroids, such as the matroid induced by a cycle on $n$ edges $\U_{n-1,n}$ \cite{proudfoot-wakefield-young}, wheel and fan graphs \cite{LXY}, partial saw graphs \cite{braden-vysogorets}, and thagomizer graphs \cite{Gedeon,xie-zhang}, there are fairly simple formulas for the Kazhdan--Lusztig polynomial. 

The lack of understanding of the Kazhdan--Lusztig polynomials of braid matroids has been the driving force for many significant developments in the theory. For example, the authors of \cite{PXY} state that their ``main motivation'' for introducing the $Z$-polynomials of matroids was to compute the Kazhdan--Lusztig polynomials of braid matroids.

To calculate $P_{\K_{n}}(t)$ and $Z_{\K_{n}}(t)$ using the definitions is in general a heavy computational task. The main result of Karn and Wakefield in \cite{KW} provides an explicit (i.e., non-recursive), but fairly complicated expression for the coefficients of $P_{\K_{n}}(t)$. In practice, the fastest known way to compute both $P_{\K_{n}}(t)$ and $Z_{\K_{n}}(t)$ is provided by the recurrence derived in \cite[Corollary~4.2]{PXY}.

The $\mathfrak{S}_n$-equivariant Kazhdan--Lusztig polynomial of $\K_n$ has also been studied. In \cite{gedeon-proudfoot-young}, the authors computed the linear term of $P^{\mathfrak{S}_n}_{\K_n}(t)$ and gave a functional equation which is satisfied by the generating function of $P^{\mathfrak{S}_n}_{\K_n}(t)$.
In \cite{PY}, the authors studied $P^{\mathfrak{S}_n}_{\K_n}(t)$ using tools from representation stability; this machinery allowed them to give a partial description of the poles of the generating function and bound which irreducible representations of $\mathfrak{S}_{n}$ can appear. See \cite{TOSTESON} for a strengthening of these results. 

A further conjecture posed originally by Elias, Proudfoot, and Wakefield \cite[Section~A]{elias-proudfoot-wakefield} asserts that the leading coefficient of $P_{\K_{2n}}(t)$ counts labelled triangular cacti on $2n-1$ nodes. Prior to the present paper, not only did this conjecture remain elusive; the problem of formulating an analogous statement for the leading coefficient of $P_{\K_{2n-1}}(t)$, or other coefficients, remained open.

Theorem~\ref{thm:main-non-equivariant} below provides a concrete combinatorial interpretation for \emph{all} the coefficients of both $P_{\K_n}(t)$ and $Z_{\K_n}(t)$ for \emph{all} $n$. 

\subsection{Main results} 

A matroid is \emph{quasi series-parallel} if it does not contain $\U_{2,4}$ or $\K_4$ as a minor. Equivalently, a matroid is quasi series-parallel if it is a direct sum of loops, coloops, and series-parallel matroids on a ground set of size at least $2$ (these are defined in Section~\ref{subsec:2.1}). The first of our main theorems can be stated as follows.

\begin{theorem}\label{thm:main-non-equivariant}
    Let $\mathcal{A}(n,r)$ denote the set of all quasi series-parallel matroids on $[n]$ of rank $r$ and, let $\mathcal{S}(n,r)$ denote the set of  simple quasi series-parallel matroids. Then
    \begin{align*} 
        [t^i]\, P_{\K_{n}}(t) &= |\mathcal{S}(n - 1,n-1-i)|, \\
        [t^i]\, Z_{\K_{n}}(t) &= |\mathcal{A}(n - 1,n-1-i)|.
    \end{align*}
\end{theorem}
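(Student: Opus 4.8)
The plan is to prove both identities simultaneously by showing that the generating function encoding these matroid counts satisfies the same defining recursion (properties (i)--(iii)) that uniquely characterizes $P_{\K_n}$ and $Z_{\K_n}$. Since the three axioms pin down the Kazhdan--Lusztig and $Z$-polynomials uniquely, it suffices to verify that the polynomials built from $|\mathcal{S}(n-1,\cdot)|$ and $|\mathcal{A}(n-1,\cdot)|$ satisfy these axioms. First I would set $\widetilde{P}_n(t) = \sum_i |\mathcal{S}(n-1,n-1-i)|\,t^i$ and $\widetilde{Z}_n(t) = \sum_i |\mathcal{A}(n-1,n-1-i)|\,t^i$, and check the easy boundary conditions: for $\rk(\K_n)=n-1$, the degree bound $\deg \widetilde{P}_n < \frac{1}{2}(n-1)$ should follow from the observation that a simple quasi series-parallel matroid on $n-1$ elements cannot have rank smaller than roughly half its size (a rank-$r$ simple series-parallel piece needs at least $r+1$ elements, so being simple forces enough elements that the rank cannot drop too far).

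The heart of the argument is the palindromicity of $\widetilde{Z}_n$, together with the compatibility
\[
\widetilde{Z}_n(t) = \sum_{F \in \mathcal{L}(\K_n)} t^{\rk(F)}\,\widetilde{P}_{\M/F}(t),
\]
where the contraction $\K_n/F$ decomposes as a direct sum of smaller braid matroids according to the blocks of the partition $F \in \Pi_n$. The key structural input is a bijective or enumerative correspondence relating quasi series-parallel matroids on $[n-1]$ to pairs (flat $F$, simple quasi series-parallel matroid on the contraction). Concretely, I would exploit the fact that a quasi series-parallel matroid is, by the stated characterization, a direct sum of loops, coloops, and simple series-parallel matroids of size $\geq 2$, so that the parallel classes of such a matroid determine a partition of the ground set; quotienting by the parallel classes yields the associated simple matroid, and this is exactly the combinatorial shadow of the $\mathcal{L}(\M)$-sum defining $Z_\M$ from $P_{\M/F}$. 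This should let me factor the relationship between $\mathcal{A}$ and $\mathcal{S}$ through the partition lattice in a way that matches the $Z$-to-$P$ summation.

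The main obstacle will be establishing palindromicity of $\widetilde{Z}_n$ of degree $n-1$, i.e. that $|\mathcal{A}(n-1,r)| = |\mathcal{A}(n-1,n-1-r)|$ as a symmetry of quasi series-parallel matroid counts. This is the non-formal step: it presumably reflects matroid duality within the class of quasi series-parallel matroids (which is closed under duality, since series-parallel matroids are self-dual as a class and duality swaps series and parallel connections), sending a rank-$r$ matroid on $n-1$ elements to a rank-$(n-1-r)$ one. I would therefore try to produce an explicit duality-based involution on $\mathcal{A}(n-1,\cdot)$ realizing the palindromic symmetry, being careful that duality sends loops to coloops and preserves the quasi series-parallel property. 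Once palindromicity is in hand and the $Z$-to-$P$ factorization is verified, uniqueness of KLS polynomials forces $\widetilde{P}_n = P_{\K_n}$ and $\widetilde{Z}_n = Z_{\K_n}$, completing the proof; I would expect the equivariant refinement stated in the introduction to follow by upgrading each of these counting statements to an isomorphism of $\mathfrak{S}_n$-representations, tracking the symmetric group action on parallel classes throughout.
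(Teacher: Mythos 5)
Your overall strategy is the same as the paper's: define candidate polynomials from the counts, verify the defining axioms (degree bound, palindromicity, the flat-sum identity), and invoke uniqueness, with the degree bound coming from a rank bound on simple quasi series-parallel matroids, palindromicity from closure of the class under duality, and the flat-sum from the loops/parallel-class structure. The paper does exactly this, equivariantly and by induction on $n$. However, two of your steps are wrong or missing as written.

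First, the contraction $\K_n/F$ does \emph{not} decompose as a direct sum of smaller braid matroids indexed by the blocks of the partition; that is the \emph{restriction} $\K_n|_F \cong \bigoplus_i \K_{|S_i|}$. Contracting the flat $F_Q$ merges each block to a single vertex, so $\K_n/F_Q$ is a complete multigraph whose \emph{simplification} is $\K_k$, where $k$ is the number of blocks. This is material, not cosmetic: Kazhdan--Lusztig polynomials are multiplicative over direct sums and invariant under simplification, so your identification would force $P_{\K_n/F_Q} = \prod_i P_{\K_{|S_i|}}$ instead of the correct $P_{\K_k}$. For instance, for the flat of $\K_5$ given by the partition $\{1,2,3,4\}\sqcup\{5\}$, your reading gives $P_{\K_4}(t)\,P_{\K_1}(t) = 1+t$, while the correct value is $P_{\K_2}(t) = 1$; with such terms, the verification of axiom (iii) fails numerically. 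The correct statement (contraction simplifies to the braid matroid on the set of blocks) is also exactly what allows the induction on $n$ to be applied to each $P_{\K_n/F}$.

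Second, you never reconcile the ground sets: flats of $\K_n$ are partitions of $[n]$, while your candidate coefficients count matroids on $[n-1]$, and the partition you extract from a quasi series-parallel matroid (its parallel classes and loops) is a partition-like structure on $[n-1]$. The device that closes this gap is a distinguished element: to a non-simple quasi series-parallel matroid $\N$ on $[n-1]$ one associates the partition of $[n]$ whose blocks are the parallel classes of $\N$ together with the block $(\text{loops of }\N)\cup\{n\}$. Then the simplification of $\N$ lives on the $k-1$ blocks not containing $n$, matching $P_{\K_n/F_Q} = P_{\K_k}$ from the induction hypothesis, and the ranks work out since $\rk(F_Q) = n-k$. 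Without this (which is precisely why the paper works with the $\mathfrak{S}_{n-1}$-action fixing $n$ rather than all of $\mathfrak{S}_n$), the block and element counts in your proposed correspondence do not match and the sum over flats cannot be identified with $|\mathcal{A}(n-1,\cdot)|$. A further minor slip: your justification of the degree bound is inverted --- what is needed is that a rank-$r$ simple quasi series-parallel matroid has at most $2r-1$ elements (rank strictly greater than half the size of the ground set), not that it ``needs at least $r+1$ elements,'' which is the trivial direction and yields no bound on the degree.
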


In fact, we will derive the preceding statement from a stronger result. We provide a description of the equivariant Kazhdan--Lusztig polynomial and the equivariant $Z$-polynomial of all braid matroids with respect to any $\mathfrak{S}_{n-1}$ subgroup of $\mathfrak{S}_n$. 

\begin{theorem}\label{thm:main-equivariant}
    Choose an $\mathfrak{S}_{n-1}$ subgroup of $\mathfrak{S}_n$. Then
    \begin{enumerate}[\normalfont(i)]
        \item The $\mathfrak{S}_{n-1}$-equivariant Kazhdan--Lusztig polynomial of $\K_n$ has $t^i$ coefficient given by the permutation representation of $\mathfrak{S}_{n-1}$ on $\mathcal{S}(n-1, n-1-i)$. 
        \item The $\mathfrak{S}_{n-1}$-equivariant $Z$-polynomial of $\K_n$ has $t^i$ coefficient given by the permutation representation of $\mathfrak{S}_{n-1}$ on $\mathcal{A}(n-1, n-1-i)$. 
    \end{enumerate}
\end{theorem}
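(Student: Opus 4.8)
The plan is to exhibit explicit candidate polynomials and verify the three axioms characterizing the equivariant Kazhdan--Lusztig and $Z$-polynomials, so that uniqueness forces the candidates to be the genuine invariants. Fix the $\mathfrak{S}_{n-1}$ subgroup stabilizing the vertex $n$. For a braid matroid equipped with any vertex-fixing group action, let $\tilde{P}$ be the polynomial whose $t^i$ coefficient is the corresponding permutation representation on simple quasi series-parallel matroids of the appropriate rank; concretely $\tilde{P}_{\K_k}(t) \in \VRep(\mathfrak{S}_{k-1})[t]$ has $t^i$-coefficient the permutation representation on $\mathcal{S}(k-1,k-1-i)$, and subgroup actions are handled by restriction. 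Define $\tilde{Z}_{\K_n}(t)$ by feeding these into the recursion of axiom (iii). Since that recursion only involves contractions $\K_n/F \cong \K_{|\pi|}$, where $\pi$ is the partition of $[n]$ corresponding to the flat $F$, the family $\{\tilde{P}_{\K_k}\}_k$ is self-contained, and it suffices to check axioms (i)--(iii) for it. Theorem~\ref{thm:main-non-equivariant} then follows by applying the dimension homomorphism $\VRep \to \Z$ to each coefficient.

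Axioms (i) and (ii) are combinatorial. The base case $\rk = 0$ is $\K_1$, for which $\mathcal{S}(0,0)$ is a single point and $\tilde{P}_{\K_1}(t)$ is the trivial representation. For the degree bound I would use that simple series-parallel matroids are cycle matroids of simple $2$-connected series-parallel graphs, which are sparse: each connected component (a coloop or a simple series-parallel matroid) of rank $\rho_j$ has at most $2\rho_j - 1$ elements. Writing a simple quasi series-parallel matroid $M$ on $n-1$ elements as a direct sum of $c \geq 1$ such pieces gives $n-1 \leq 2\rk M - c \leq 2 \rk M - 1$, so every $M \in \mathcal{S}(n-1,\rho)$ has $\rho \geq n/2$; hence $\deg \tilde{P}_{\K_n} < \tfrac{1}{2}(n-1) = \tfrac{1}{2}\rk \K_n$.

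The heart of the argument is an equivariant ``un-simplification'' identity computing $\tilde{Z}_{\K_n}$. The stabilizer $\Gamma_F$ of a flat $F$ acts on $\K_n/F \cong \K_{|\pi|}$ through its action on the blocks of $\pi$ other than the block containing $n$, so $\tilde{P}_{\K_n/F}^{\Gamma_F}$ is the permutation representation of $\Gamma_F$ on simple quasi series-parallel matroids supported on those blocks. Summing $\Ind_{\Gamma_F}^{\mathfrak{S}_{n-1}}$ over orbits of flats turns the recursion into the permutation representation on the $\mathfrak{S}_{n-1}$-set of pairs $(F,M)$, with $M$ a simple quasi series-parallel matroid of rank $\rho$ on the non-special blocks of $F$, placed in degree $t^{n-1-\rho}$. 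I would then produce a rank-preserving, $\mathfrak{S}_{n-1}$-equivariant bijection between such pairs and quasi series-parallel matroids on $[n-1]$: send $N$ to $(F, \simple(N))$, where the blocks of $\pi$ are the parallel classes of $N$ together with one block consisting of all loops of $N$ and the point $n$. The inverse turns each non-special block into a parallel class and each remaining element of the special block into a loop, and one checks this preserves the quasi series-parallel property, since adding loops and parallel elements creates neither a $\U_{2,4}$ nor a $\K_4$ minor. This shows the $t^i$ coefficient of $\tilde{Z}_{\K_n}(t)$ is the permutation representation on $\mathcal{A}(n-1,n-1-i)$, which is exactly part (ii).

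Finally, palindromicity of $\tilde{Z}_{\K_n}$, the substance of axiom (iii), follows from matroid duality. The class of quasi series-parallel matroids is closed under duality, because the excluded minors $\U_{2,4}$ and $\K_4$ are self-dual, and $N \mapsto N^*$ is an $\mathfrak{S}_{n-1}$-equivariant involution on $[n-1]$ sending rank $r$ to rank $n-1-r$. Hence $\mathcal{A}(n-1,r) \cong \mathcal{A}(n-1,n-1-r)$ as $\mathfrak{S}_{n-1}$-sets, so $\tilde{Z}_{\K_n}(t)$ is palindromic of degree $n-1 = \rk \K_n$. With all three axioms verified, uniqueness forces $\tilde{P}_{\K_n} = P_{\K_n}^{\mathfrak{S}_{n-1}}$ and $\tilde{Z}_{\K_n} = Z_{\K_n}^{\mathfrak{S}_{n-1}}$, proving both parts. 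I expect the main obstacle to be the equivariant bookkeeping in the un-simplification identity: matching the induced representations $\Ind_{\Gamma_F}^{\mathfrak{S}_{n-1}}$ arising from the recursion with the permutation action on $\mathcal{A}(n-1,\cdot)$, and verifying that simplification and its inverse interact correctly with both the lattice of flats of $\K_n$ and the quasi series-parallel condition.
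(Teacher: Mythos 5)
Your proposal is correct and follows essentially the same route as the paper's proof: the same characterization-by-uniqueness strategy, the same degree bound (Proposition~\ref{prop:rankbound}), palindromicity via duality-closure of the quasi series-parallel class, and the same un-simplification bijection (the paper's $\widetilde{\M}$ construction) to establish the defining recursion, all wrapped in an induction on $n$. The one point to state carefully is that $\K_n/F$ is not isomorphic to $\K_{|\pi|}$ but only simplifies to it, which is why the paper proves Proposition~\ref{prop:simplify} (equivariant Kazhdan--Lusztig polynomials pull back along simplification); your appeal to the $\Gamma_F$-action factoring through the blocks is precisely this fact and should be made explicit.
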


An immediate consequence of our main results and  Poincar\'{e} duality and the Hard Lefschetz theorem for intersection cohomology is that the rank-indexed sequence of the numbers of quasi series-parallel matroids on $[n]$ is unimodal. More so, it is a $\gamma$-positive sequence, by \cite[Theorem~1.8]{FMSV}.

We also use Theorem~\ref{thm:main-non-equivariant} to confirm the conjecture in \cite[Section~A]{elias-proudfoot-wakefield} on the leading coefficient of $P_{\K_{2n}}(t)$; see Proposition~\ref{prop:evensize}. In \cite[Remark 6.3]{PY}, Proudfoot and Young note that this conjecture shows the existence of a certain pole of the generating function for the $i$-th coefficient of $P_{\K_{n}}(t)$. 
We also give an expression for the leading term of $P_{\K_{2k+1}}(t)$; see Proposition~\ref{prop:odd}.

Since the study of several objects related to (quasi) series-parallel matroids is a recurring problem in enumerative combinatorics, we are able to present explicit (but complicated) generating functions for Kazhdan--Lusztig and $Z$-polynomials of braid matroids. In particular, this provides an additional tool to study asymptotics of the Betti numbers and the total dimensions of intersection cohomologies of braid matroids, cf. Remark~\ref{rem:moon} below.

\section{Background}

Throughout this paper we shall assume familiarity with the basic properties of matroids. In particular, we mostly follow the notation and terminology of Oxley~\cite{oxley}.

\subsection{Series-parallel matroids}\label{subsec:2.1}

Series-parallel matroids are a prominent family of matroids which pervade graph theory and the theory of electrical networks. Given the variety of sources, coming from both graph theory and enumerative combinatorics, that define similar but different objects under the name ``series-parallel'', we will include a recapitulation of the basic terminology and background of this topic in matroid theory, following \cite[Section~5.4]{oxley}.

Two operations play an important role in the construction of these matroids. Assume that $\M$ and $\M'$ are matroids on $E$ and $E'$ respectively. We say that $\M'$ is a \emph{parallel extension} of $\M$ if there is an element $e\in E'$ which belongs to a circuit of size $2$ in $\M'$ and satisfies that $\M' \setminus e=\M$. Dually, we say that $\M'$ is a \emph{series extension} of $\M$ if there is an element $e\in E'$ which belongs to a cocircuit of size $2$ in $\M'$ and satisfies $\M'/e = \M$. In particular, notice that, if we take $\M = \U_{1,1}$ and $\M'=\U_{2,2}$, then $\M'$ is \emph{not} a series extension of $\M$.

By definition, a \emph{series-parallel matroid} is a matroid that is obtained from a single loop or a single coloop via a finite (possibly empty) sequence of series or parallel extensions\footnote{This follows Oxley's conventions. In particular, all series-parallel matroids on a ground set of size at least $2$ are actually obtained via series and parallel extensions starting from the matroid $\U_{1,2}$.}. We can deduce the following elementary conclusions from the definition of these matroids.
    \begin{itemize}
        \item Series-parallel matroids are connected matroids.
        \item The rank of a series-parallel matroid $\M$ on a ground set of size at least $2$ is equal to one plus the number of series extensions performed in the construction of $\M$ from $\U_{1,2}$. 
    \end{itemize}

A classical paper by Brylawski~\cite{brylawski} establishes the following list of equivalences, reformulated also by Oxley in \cite[Theorem~2.1]{oxley-beta}.

\begin{proposition}[{\cite[Theorem~7.6]{brylawski}}]
    Let $\M$ be a matroid. The following are equivalent.
    \begin{enumerate}[\normalfont (i)]
        \item $\M$ is a series-parallel matroid.
        \item $\M\cong \U_{0,1}$  or $\beta(\M) = 1$.
        \item $\M$ is connected and does not contain any minors isomorphic to $\K_4$ or $\U_{2,4}$.
    \end{enumerate}
\end{proposition}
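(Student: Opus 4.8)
The plan is to prove the cyclic chain of implications $(i) \Rightarrow (ii) \Rightarrow (iii) \Rightarrow (i)$, using the Crapo beta invariant $\beta(\M)$ as the main bookkeeping device for the first two implications and reserving the connectivity-theoretic heavy lifting for the last. First I would recall the standard properties of $\beta$: it is non-negative on every matroid; it satisfies the deletion--contraction recursion $\beta(\M) = \beta(\M \setminus e) + \beta(\M/e)$ whenever $e$ is neither a loop nor a coloop; it is unchanged under series and parallel extensions; one has $\beta(\M) > 0$ if and only if $\M$ is connected, provided $|E(\M)| \geq 2$; and $\beta(\U_{2,4}) = \beta(\K_4) = 2$ while $\beta(\U_{1,2}) = 1$ and $\beta(\U_{0,1}) = 0$. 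A consequence of non-negativity together with deletion--contraction, which I will use repeatedly, is the monotonicity $\beta(N) \leq \beta(\M)$ whenever $N$ is a connected minor of a connected matroid $\M$.

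For $(i) \Rightarrow (ii)$ I would induct on the number of series and parallel extensions used to build $\M$: the single loop gives the first alternative $\M \cong \U_{0,1}$, and every other series-parallel matroid is built from $\U_{1,2}$, which has $\beta = 1$, so since each extension leaves $\beta$ unchanged one gets $\beta(\M) = 1$ throughout. For $(ii) \Rightarrow (iii)$, if $\beta(\M) = 1$ then $\beta(\M) > 0$ forces $\M$ to be connected, and if $\M$ contained a minor isomorphic to $\U_{2,4}$ or $\K_4$, then, these being connected with beta invariant $2$, monotonicity would give $\beta(\M) \geq 2$, a contradiction; the degenerate case $\M \cong \U_{0,1}$ is visibly connected with no such minor.

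The substance is $(iii) \Rightarrow (i)$, which I would prove by induction on $|E(\M)|$, the small cases ($|E(\M)| \leq 2$, where connectedness forces $\M \in \{\U_{1,1}, \U_{1,2}\}$ up to loops) being handled by hand. For the inductive step, if $\M$ has a $2$-circuit (a parallel pair) I delete one of its elements, and if $\M$ has a $2$-cocircuit (a series pair) I contract one of its elements; in either case the result is connected, has no $\U_{2,4}$ or $\K_4$ minor (the class of such matroids is minor-closed), and is strictly smaller, so by induction it is series-parallel, whence $\M$ is a parallel or series extension of it and is itself series-parallel.

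It therefore remains to rule out the case where $\M$ is connected on at least four elements yet has neither a $2$-circuit nor a $2$-cocircuit, i.e. $\M$ is simultaneously simple and cosimple; this is the main obstacle. Here I would split on connectivity. If $\M$ has a $2$-separation, it decomposes as a $2$-sum $\M_1 \oplus_2 \M_2$ of two connected matroids, each a minor of $\M$ and each strictly smaller, so each is series-parallel by induction, and one checks that a $2$-sum of series-parallel matroids is again series-parallel (they correspond to gluing series-parallel networks along an edge). If instead $\M$ is $3$-connected, I invoke Tutte's Wheels-and-Whirls theorem: a $3$-connected matroid on at least four elements is a wheel, a whirl, or admits a single-element deletion or contraction that is again $3$-connected. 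Since every wheel of rank at least three contains $\K_4$ and every whirl contains $\U_{2,4}$ as a minor, an easy induction then shows that every $3$-connected matroid on at least four elements has a $\U_{2,4}$ or $\K_4$ minor, contradicting $(iii)$; so this last case cannot occur, completing the induction. The delicate points to verify carefully are that $\beta$ is genuinely monotone under connected minors and that the $3$-connected reduction really terminates at $\U_{2,4}$ or $\K_4$, and these are where I would expect to spend the most effort.
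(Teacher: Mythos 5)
The paper does not actually prove this proposition: it is imported as a classical result, with the proof delegated to Brylawski \cite{brylawski} (see also Oxley's reformulation in \cite{oxley-beta}), so there is no internal argument to compare yours against. Judged on its own terms, your proof is correct in structure and is essentially the standard modern treatment that one can assemble from Oxley's book: $\beta$-invariance under series/parallel extensions and positivity of $\beta$ on connected matroids for (i)$\Rightarrow$(ii)$\Rightarrow$(iii), and for (iii)$\Rightarrow$(i) the trichotomy of a parallel/series pair, a $2$-separation, or $3$-connectivity, with the last case excluded by Tutte's wheels-and-whirls theorem (every $3$-connected matroid on at least four elements has a wheel or whirl minor, hence a $\K_4$ or $\U_{2,4}$ minor). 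This route buys a self-contained argument at the cost of invoking two substantial classical theorems; Brylawski's original proof runs through his decomposition theory for the $\beta$-invariant instead.

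Three points in your write-up need attention, though none is fatal. First, in (i)$\Rightarrow$(ii) you assert that every series-parallel matroid other than the single loop is built from $\U_{1,2}$; the single coloop $\U_{1,1}$ is also a base case of the definition and is not built from $\U_{1,2}$, but since $\beta(\U_{1,1})=1$ statement (ii) still holds for it, so this is only a phrasing slip. Second, the monotonicity $\beta(\N)\le\beta(\M)$ for a connected minor $\N$ of a connected matroid $\M$ is \emph{not} a formal consequence of non-negativity plus deletion--contraction alone: a naive induction can force you through a disconnected intermediate minor, where $\beta$ vanishes even though $\beta(\N)=1$. You need the standard lemma (\cite[Proposition~4.3.1]{oxley}) that if $e\in E(\M)\setminus E(\N)$ then $\M\setminus e$ or $\M/e$ is connected and still has $\N$ as a minor; you correctly flagged this as the delicate spot, and with that lemma the step is sound. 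Third, in the $2$-separation case, closure of series-parallel matroids under $2$-sums should be proved by induction on the construction of one summand (peeling off the last series or parallel extension, which is compatible with the $2$-sum, the base case being that a $2$-sum with $\U_{1,2}$ is the identity); resist the temptation to deduce it from multiplicativity of $\beta$ over $2$-sums, since concluding ``$\beta=1$ implies series-parallel'' would use (ii)$\Rightarrow$(i), which is not yet available at that stage of your cyclic argument.
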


Here $\beta(\M)$ is the \emph{$\beta$-invariant} of $\M$ introduced by Crapo~\cite{crapo}. Notice that $\beta(\M)=\beta(\M^*)$ whenever $\M\not\cong\U_{0,1},\U_{1,1}$. On the other hand, it follows either by the definition or the above result that $\M$ is series-parallel if and only if $\M^*$ is series-parallel --- even in the case in which $\M$ is either a loop or a coloop.

\begin{table}[htb]
    
    \begin{subtable}[t]{.5\textwidth}
        \caption{Series-parallel matroids\\ on $[n]$ of rank $k$}
        \raggedright
        \begin{tabular}{l l l l l l l l l}\hline
        $k\backslash n=$    & 1   & 2 & 3 & 4 & 5 & 6 & 7  \\ \hline
        0 &  1 & 0 & 0 & 0 & 0 & 0 & 0  \\
        1 &   1 & 1 & 1 & 1 & 1 & 1 & 1  \\
        2 &    & 0 & 1 & 6 & 25 & 90 & 301  \\
        3 &    &  & 0 & 1 & 25 & 290 & 2450  \\
        4 &    &  &  & 0 & 1 & 90 & 2450  \\
        5 &    &  &  &  & 0 & 1 & 301  \\
        6 &    &  &  &  &  & 0 & 1  \\
        7 &    &  &  &  &  &  & 0 \\
        \end{tabular}
    \end{subtable}
    \begin{subtable}[t]{.5\textwidth}
        \raggedleft
        \caption{Quasi series-parallel matroids\\ on $[n]$ of rank $k$}
        \begin{tabular}{l l l l l l l l l l l l l l l}\hline
        $k\backslash n=$   & 1   & 2 & 3 & 4 & 5 & 6 & 7   \\ \hline
        0 &   1 & 1 & 1 & 1 & 1 & 1 & 1  \\
        1 &   1 & 3 & 7 & 15 & 31 & 63 & 127  \\
        2 &    & 1 & 7 & 35 & 155 & 651 & 2667  \\
        3 &    &  & 1 & 15 & 155 & 1365 & 10941  \\
        4 &    &  &  & 1 & 31 & 651 & 10941  \\
        5 &    &  &  &  & 1 & 63 & 2667  \\
        6 &    &  &  &  &  & 1 & 127  \\
        7 &    &  &  &  &  &  & 1  \\
        \end{tabular}
    \end{subtable}
    \caption{Enumeration of series-parallel and quasi series-parallel matroids.}
    \label{table}
\end{table}

\begin{example}\label{ex:one-graph}
    Using the recursive definition of series-parallel matroids, one may check that there are exactly $4$ isomorphism classes of series-parallel matroids on $6$ elements of rank $3$. They are induced by the graphs depicted in Figure~\ref{fig:series-parallel}. Notice that the first of the four matroids is isomorphic to $20$ matroids on $[6]$, the second one is isomorphic to $180$ matroids on $[6]$, the last two are each isomorphic to $45$ matroids on $[6]$. These add up to $20+180+45+45 = 290$ series-parallel matroids with ground set $\{1,2,3,4,5,6\}$ of rank $3$ (see Table~\ref{table}).
    \begin{figure}[ht]
        \centering
    	\begin{tikzpicture}  
    	[scale=0.95,auto=center,every node/.style={circle,scale=0.7, fill=black, inner sep=2.7pt}] 
    	\tikzstyle{edges} = [thick];
    	
    	\node (a1) at (0,0) {};  
    	\node (a2) at (2,0)  {};
    	\node (a3) at (2,2)  {};  
    	\node (a4) at (0,2) {};
    	
    	\draw[edges] (a1) edge[bend left = 20] (a2);
    	\draw[edges] (a1) edge[bend right = 20] (a2);
    	\draw[edges] (a1) edge (a2);
    	\draw[edges] (a2) edge (a3);
    	\draw[edges] (a3) edge (a4);
    	\draw[edges] (a4) edge (a1);
    	\end{tikzpicture}\qquad 
    	\begin{tikzpicture}  
    	[scale=0.95,auto=center,every node/.style={circle,scale=0.7, fill=black, inner sep=2.7pt}] 
    	\tikzstyle{edges} = [thick];
    	
    	\node (a1) at (0,0) {};  
    	\node (a2) at (2,0)  {};
    	\node (a3) at (2,2)  {};  
    	\node (a4) at (0,2) {};
    	
    	\draw[edges] (a1) edge[bend left = 20] (a2);
    	\draw[edges] (a1) edge[bend right = 20] (a2);
    	\draw[edges] (a2) edge (a3);
    	\draw[edges] (a3) edge (a4);
    	\draw[edges] (a4) edge (a1);
    	\draw[edges] (a1) edge (a3);
    	\end{tikzpicture}
    	\qquad
    	\begin{tikzpicture}  
    	[scale=0.95,auto=center,every node/.style={circle,scale=0.7, fill=black, inner sep=2.7pt}] 
    	\tikzstyle{edges} = [thick];
    	
    	\node (a1) at (0,0) {};  
    	\node (a2) at (2,0)  {};
    	\node (a3) at (2,2)  {};  
    	\node (a4) at (0,2) {};
    	
    	\draw[edges] (a1) edge[bend left = 20] (a3);
    	\draw[edges] (a1) edge[bend right = 20] (a3);
    	\draw[edges] (a1) edge (a2);
    	\draw[edges] (a1) edge[bend right = 20,color=white] (a2);
    	\draw[edges] (a2) edge (a3);
    	\draw[edges] (a3) edge (a4);
    	\draw[edges] (a4) edge (a1);
    	\end{tikzpicture}
    	\qquad
    	\begin{tikzpicture}  
    	[scale=0.95,auto=center,every node/.style={circle,scale=0.7, fill=black, inner sep=2.7pt}] 
    	\tikzstyle{edges} = [thick];
    	
    	\node (a1) at (0,0) {};  
    	\node (a2) at (2,0)  {};
    	\node (a3) at (2,2)  {};  
    	\node (a4) at (0,2) {};
    	
    	\draw[edges] (a1) edge[bend left = 20] (a2);
    	\draw[edges] (a1) edge[bend right = 20] (a2);
    	\draw[edges] (a3) edge[bend left = 20] (a2);
    	\draw[edges] (a3) edge[bend right = 20] (a2);
    	\draw[edges] (a3) edge (a4);
    	\draw[edges] (a4) edge (a1);
    	\end{tikzpicture}
    	\caption{Isomorphism classes of series-parallel matroids on $[6]$ of rank $3$.}\label{fig:series-parallel}
    \end{figure}
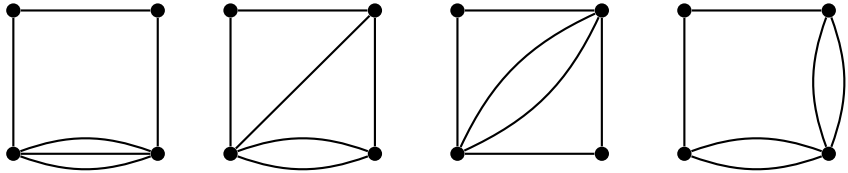
\end{example}

\begin{proposition}
    Let $\mathcal{C}(n,k)$ denote the set of series-parallel matroids on $[n]$ of rank $k$. Then, the bivariate generating function 
    \[ C(x,y) := \sum_{n=1}^{\infty} \sum_{k=0}^n |\mathcal{C}(n,k)| \frac{x^n}{n!} \, y^k ,\]
    is given by
    \begin{equation}\label{eq:C-gen-fun}
        C(x,y) = x(y+1) + y\int \left[ \left( \frac{1}{y} \log(1+xy) + \log(1+x) - x\right)^{\left<-1\right>}\right] dx.
    \end{equation}
    Here the symbol $\left<-1\right>$ stands for the compositional inverse of the function inside the parenthesis with respect to the variable $x$, i.e., treating $y$ as a constant.
\end{proposition}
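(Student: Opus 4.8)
The plan is to enumerate connected series-parallel matroids through their \emph{canonical tree decomposition} into $3$-connected pieces, circuits, and cocircuits (Cunningham--Edmonds; see \cite[Chapter~8]{oxley}), specialized to the series-parallel case. Since a series-parallel matroid has no $\U_{2,4}$ or $\K_4$ minor, no brick can be a $3$-connected matroid with at least four elements; hence every connected series-parallel matroid with at least two elements is a $2$-sum of circuits $\U_{r,r+1}$ ($r\ge 2$) and bonds (cocircuits) $\U_{1,s}$ ($s\ge 3$), arranged in a tree in which adjacent bricks have opposite types, with $\U_{1,2}$ as the unique degenerate base. First I would record this as a bijection between connected series-parallel matroids on $[n]$ with $n\ge 2$ and such alternating circuit/bond trees with $n$ labelled real elements, together with the rank identity $\rk(\M)=1+\sum_{c}(\lvert c\rvert-2)$, the sum running over the circuit bricks.

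Next I would translate this into exponential generating functions, with $x$ marking the labelled elements and $y$ marking rank. Because $\U_{1,s}$ and $\U_{r,r+1}$ are fully symmetric matroids, their legs form unordered sets, so each brick is a $\mathrm{SET}$-construction, and invariance under $2$-isomorphism guarantees that distinct trees yield distinct matroids. Rooting a piece at the leg glued to its parent, I would introduce $S(x,y)$ for pieces whose root brick is a circuit and $P(x,y)$ for pieces whose root brick is a bond; the alternation forces the children of a circuit to be elements or bond-pieces, and vice versa. Assigning each circuit of size $r+1$ the weight $y^{r-1}$ and each bond the weight $1$, the symmetry yields
\[
P = e^{\,x+S}-1-(x+S), \qquad S = \tfrac{1}{y}\bigl(e^{\,y(x+P)}-1-y(x+P)\bigr),
\]
where the subtracted terms enforce that each brick has at least two children (equivalently, size at least $3$).

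Then I would point at a real element: this is the operator $\partial_x$, and since a marked element lies in a unique brick (or in the $\U_{1,2}$ base), setting $G:=x+P+S$ gives $\partial_x C^{\circ}=y\,G$, where $C^{\circ}$ is the part of $C$ with $n\ge 2$, the global factor $y$ coming from the ``$1+$'' in the rank identity; the leading $x$ in $G$ is the $\U_{1,2}$ base, while the loop and coloop contribute $x(1+y)$. Writing $a=x+S$ and $b=x+P$, so that $G=a+b-x$, the two functional equations collapse: the first gives $e^{a}=1+G$ and the second gives $e^{yb}=1+yG$, whence $a=\log(1+G)$ and $b=\tfrac{1}{y}\log(1+yG)$. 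Substituting back yields
\[
x = a+b-G = \tfrac{1}{y}\log(1+yG)+\log(1+G)-G,
\]
that is, $G=\phi^{\langle-1\rangle}$ for the $\phi$ in the statement; integrating $\partial_x C=(1+y)+yG$ produces the claimed formula.

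The main obstacle is the combinatorial bookkeeping of the first two steps rather than the final algebra: one must pin down the canonical decomposition precisely---the alternation of brick types, the exclusion of degenerate $\U_{1,2}$ bricks, and the separate treatment of loops, coloops, and the $\U_{1,2}$ base---and verify that the rank contribution localizes to the circuit bricks exactly as $\rk(\M)=1+\sum_{c}(\lvert c\rvert-2)$. A sanity check against small cases (e.g.\ $G=x+\tfrac{1+y}{2}x^{2}+\cdots$, matching $c_2=y$ and $c_3=y+y^{2}$) confirms that the size constraints ``at least two children'' are the correct ones; getting these constraints wrong is precisely what would spoil the identity.
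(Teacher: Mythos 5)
Your proof is correct, but it takes a genuinely different route from the paper. The paper does not derive any functional equations: it simply invokes Drake's thesis, where the series $\bigl(\tfrac{1}{\alpha}\log(1+\alpha x)+\tfrac{1}{\beta}\log(1+\beta x)-x\bigr)^{\langle -1\rangle}$ is obtained as the generating function of series-parallel networks weighted by the number of series extensions, via a combinatorial interpretation of Lagrange inversion; the paper's proof then specializes $\alpha=y$, $\beta=1$, integrates, multiplies by $y$, and adds the correction $x(y+1)$ accounting for the two one-element matroids, which do not appear in Drake's formalism of parenthesized expressions. Your argument is instead self-contained and works directly with matroids: the Cunningham--Edmonds decomposition of a connected series-parallel matroid into an alternating $2$-sum tree of circuits and bonds, the localization of rank to the circuit bricks (correct, since a $2$-sum of $k$ pieces has rank equal to the sum of the ranks minus $k-1$, giving $\rk(\M)=1+\sum_c(|c|-2)$), the rooted equations for $S$ and $P$, the pointing identity $\partial_x C^{\circ}=yG$, and the elimination $x=\tfrac{1}{y}\log(1+yG)+\log(1+G)-G$ are all sound; the low-order expansion $G=x+\tfrac{1+y}{2}x^2+\tfrac{y^2+6y+1}{6}x^3+\cdots$ indeed matches Drake's coefficients and the counts $1,\,6,\,1$ for $n=4$ in the paper's table. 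What your route buys: you never need the translation between matroids and Drake's networks-up-to-equivalence (the source of the paper's slightly awkward correction term), and the appearance of a compositional inverse is explained structurally, by eliminating the two rooted series. What the paper's route buys: brevity, and a pointer to a classical literature in which essentially your functional equations already live.

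Two small points to tighten. First, injectivity of the map from alternating trees to matroids follows from the \emph{uniqueness} of the canonical decomposition, not from ``invariance under $2$-isomorphism''; that notion concerns graphs realizing a given cycle matroid and plays no role once you work with matroids directly. Second, surjectivity deserves a sentence: every alternating circuit/bond tree $2$-sums to a \emph{connected series-parallel} matroid, because connectivity is preserved by $2$-sums and the class of matroids with no $\U_{2,4}$- or $\K_4$-minor is closed under $2$-sums. Both facts are standard and do not affect the validity of your argument.
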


\begin{proof}
    The case $y=1$ of the above formula reduces to a classical problem studied extensively in the literature; see \cite[Solution of Exercise~5.40]{stanley-ec2} and references mentioned therein. The case of our interest was essentially carried out by Drake in his PhD thesis \cite[Example~1.5.1]{drake-thesis}. By relying on a combinatorial interpretation for Lagrange's inversion theorem, he establishes a generating function for ``series-parallel networks'' on a labelled ground set of edges according to the number of series extensions. Explicitly:
    \begin{align*}
        \left(\frac{1}{\alpha}\log(1+\alpha x) + \frac{1}{\beta}\log(1+\beta x) - x\right)^{\left<-1\right>} &= x + (\alpha+\beta) \frac{x^2}{2!} + (\alpha^2+6\alpha\beta+\beta^2) \frac{x^3}{3!}+\\
        &\qquad  (\alpha^3+25\alpha^2\beta+25\alpha\beta^2+\beta^3) \frac{x^4}{4!} + \cdots
    \end{align*}
    By taking $\alpha=y$ and $\beta=1$ and integrating, we obtain:
    \begin{align*}
        \int \left(\frac{1}{y}\log(1+xy) + \log(1+x) - x\right)^{\left<-1\right>} dx &= \frac{x^2}{2!} + (y+1) \frac{x^3}{3!} + (y^2+6y+1) \frac{x^4}{4!}+\\
        &\qquad  (y^3+25y^2+25y+1) \frac{x^5}{5!} + \cdots
    \end{align*}
    We must make a small correction by multiplying by $y$ and adding $x(y+1)$. The term $x(y+1)$ comes from considering the two matroids with ground set of size $1$. The reason these do not appear in Drake's formula is because, under the definition he uses for ``series-parallel networks'' as parenthesized expressions under an equivalence relation, these two matroids would come from ``empty'' expressions.
\end{proof}

\begin{remark}
    We do not know a way to simplify~\eqref{eq:C-gen-fun}. We point out that the use of formulas for the antiderivative of an inverse function does not help to get rid of the integral symbol. On the other hand,  equally complicated expressions can be deduced by using Lambert's $W$-function.
\end{remark}

\subsection{Quasi series-parallel matroids}
Although the class of series-parallel matroids is closed under taking duals, it is not closed under taking direct sums or taking minors. The best way to resolve this issue is through the class of quasi series-parallel matroids.

\begin{definition}
    Let $\M$ be a matroid. We say that $\M$ is a \emph{quasi series-parallel matroid} if all the connected components of $\M$ are series-parallel matroids.
\end{definition}

Although the name ``quasi series-parallel matroid'' is new, this class of matroids has actually appeared before in the literature\footnote{In fact, Seymour \cite{seymour} calls ``series-parallel matroids'' what we have called ``quasi series-parallel matroids''. We emphasize once again that we are following Oxley's convention.}.

\begin{proposition}
    Let $\M$ be a matroid. The following are equivalent:
    \begin{enumerate}[\normalfont(i)]
        \item $\M$ is a quasi series-parallel matroid.
        \item $\M^*$ is a quasi series-parallel matroid.
        \item $\M$ does not contain any minor isomorphic to $\K_4$ or $\U_{2,4}$.
        \item $\M$ is a binary gammoid.
        \item $\M$ is a regular gammoid.
        \item $\M$ has branch-width smaller than or equal to $2$.
    \end{enumerate}
\end{proposition}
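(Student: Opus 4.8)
The plan is to route all six equivalences through condition~(iii), the excluded-minor description, since that is the statement most directly supported by the material developed above.

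I would first dispose of the two ``soft'' equivalences. For (i)~$\Leftrightarrow$~(ii), note that $(\M_1 \oplus \cdots \oplus \M_k)^* = \M_1^* \oplus \cdots \oplus \M_k^*$, so the connected components of $\M^*$ are exactly the duals of those of $\M$; since a connected matroid is series-parallel precisely when its dual is (as recorded after Brylawski's theorem), (i) and (ii) coincide. For (i)~$\Leftrightarrow$~(iii), the key point is that every minor of $\bigoplus_i \M_i$ is the direct sum of minors of the $\M_i$, so a \emph{connected} matroid on at least two elements can occur as a minor only if it is a minor of a single $\M_i$. As $\K_4$ and $\U_{2,4}$ are connected with more than one element, $\M$ avoids them as minors iff each component does; and by Brylawski's theorem a connected matroid avoids $\K_4$ and $\U_{2,4}$ iff it is series-parallel (single loops and coloops being series-parallel by convention). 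This is exactly the condition in (i).

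For the gammoid conditions I would prove the cycle (iii)~$\Rightarrow$~(v)~$\Rightarrow$~(iv)~$\Rightarrow$~(iii), which has the pleasant feature of never requiring a separate discussion of $F_7$ or $F_7^*$. For (iii)~$\Rightarrow$~(v): a quasi series-parallel matroid is a direct sum of connected series-parallel matroids, each of which (including loops and coloops) is the cycle matroid of a series-parallel graph; hence $\M$ is graphic, and therefore regular. To see that $\M$ is a gammoid, I would use that the class of gammoids contains $\U_{0,1}$, $\U_{1,1}$, and $\U_{1,2}$ and is closed under direct sums, duality, and single-element parallel (hence, dually, series) extensions; the base cases then generate every series-parallel matroid, and closure under direct sums yields every quasi series-parallel matroid. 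The implication (v)~$\Rightarrow$~(iv) is immediate, since regular matroids are binary, and for (iv)~$\Rightarrow$~(iii) a binary matroid has no $\U_{2,4}$-minor while a gammoid has no $\K_4$-minor, gammoids being minor-closed.

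The one genuinely external input, and what I expect to be the main obstacle, is the fact that the graphic matroid $\K_4$ is \emph{not} a gammoid. I would deduce this from strong base-orderability: gammoids are strongly base-orderable, being minors of the (strongly base-orderable) transversal matroids, whereas $\K_4$ fails this property --- a direct check on the complementary spanning trees $\{12,23,34\}$ and $\{13,14,24\}$ shows that none of the three bijections between them has the strong exchange property. The closure of gammoids under parallel extension, which I also use, can be verified at the level of linkage representations: one adjoins the new element as a fresh vertex carrying a single arc, directed into it from the element it parallels, which leaves the rest of the matroid unchanged and makes the two elements parallel. Finally, for (iii)~$\Leftrightarrow$~(vi) I would cite the excluded-minor characterization of matroids of branch-width at most~$2$, namely that these are exactly the matroids with no $\U_{2,4}$- and no $\K_4$-minor, which matches (iii) verbatim. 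If a clean citation for the parallel-extension closure or for the branch-width characterization proves elusive, these are the two spots where a short self-contained argument would be required.
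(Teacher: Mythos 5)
Your proposal is correct, but it takes a genuinely different route from the paper for the simple reason that the paper does not prove this proposition at all: its entire ``proof'' is the observation that the statement is a restatement of Theorem~10.4.8 and Proposition~14.2.6 of Oxley's book. What you have written is, in effect, the argument sitting behind that citation. Your reductions (i)~$\Leftrightarrow$~(ii) and (i)~$\Leftrightarrow$~(iii) --- via duality of direct sums, the fact that a connected minor of a direct sum is a minor of a single summand, and Brylawski's theorem (which the paper records immediately before this proposition) --- are exactly right, and the cycle (iii)~$\Rightarrow$~(v)~$\Rightarrow$~(iv)~$\Rightarrow$~(iii) is sound: series-parallel matroids are graphic, hence regular; the gammoid property follows from closure of gammoids under duality, direct sums, and parallel extension; and the excluded-minor direction needs only that gammoids are minor-closed, that binary matroids have no $\U_{2,4}$-minor, and that $\K_4$ is not a gammoid. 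What your approach buys is a proof that is self-contained modulo the standard facts you flag (plus the branch-width equivalence (iii)~$\Leftrightarrow$~(vi), which remains a citation for you exactly as it does for the paper); what the paper's approach buys is two lines. Two small repairs to your key external step: there are six, not three, bijections between the complementary bases $\{12,23,34\}$ and $\{13,14,24\}$ of $\K_4$; and you do not need \emph{strong} base-orderability to rule them out --- a direct check of two-sided single-element exchanges shows that each of $12$ and $34$ can be exchanged only with $14$, so no injection with the required property exists, and $\K_4$ already fails ordinary base-orderability. Since transversal matroids are base-orderable and base-orderability is minor-closed, this lighter fact suffices for your argument.
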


The above result is essentially a restatement of \cite[Theorem~10.4.8]{oxley} and \cite[Proposition~14.2.6]{oxley}; we refer to Oxley's book for the undefined terminology. 

\begin{example}
    There are exactly $6$ isomorphism classes of quasi series-parallel matroids on $4$ elements with rank $2$. They are depicted in Figure~\ref{fig:quasi-series-parallel}. The number of different matroids on $\{1,2,3,4\}$ isomorphic to each of these matroids is $6$, $12$, and $6$ for the three matroids on the top, from left to right, and $4$, $3$, and $4$ respectively for the three on the bottom. This amounts to $6+12+6+4+3+4=35$ quasi series-parallel matroids with ground set $\{1,2,3,4\}$ of rank $2$ (see Table~\ref{table}).
    \begin{figure}[ht]
        \centering
    	\begin{tikzpicture}  
    	[scale=0.85,auto=center,every node/.style={circle,scale=0.7, fill=black, inner sep=2.7pt}] 
    	\tikzstyle{edges} = [thick];
    	
    	\node (a1) at (0,0) {};  
    	\node (a2) at (2,0)  {};
    	\node (a3) at (4,0)  {};
    	
    	\draw[edges] (a1) to[out=135,in=45,looseness=20] (a1);
    	\draw[edges] (a1) edge (a2);
    	\draw[edges] (a2) edge (a3);
    	\draw[edges] (a2) edge[bend right = 30,color=white] (a3);
    	\draw[edges] (a3) to[out=135,in=45,looseness=20] (a3);
    	\end{tikzpicture}\enspace
    	\begin{tikzpicture}  
    	[scale=0.85,auto=center,every node/.style={circle,scale=0.7, fill=black, inner sep=2.7pt}] 
    	\tikzstyle{edges} = [thick];
    	
    	\node (a1) at (0,0) {};  
    	\node (a2) at (2,0)  {};
    	\node (a3) at (4,0)  {};
    	
    	\draw[edges] (a1) to[out=135,in=45,looseness=20] (a1);
    	\draw[edges] (a1) edge (a2);
    	\draw[edges] (a2) edge[bend left = 30] (a3);
    	\draw[edges] (a2) edge[bend right = 30] (a3);
    	\end{tikzpicture}\qquad\enspace
    	\begin{tikzpicture}  
    	[scale=0.85,auto=center,every node/.style={circle,scale=0.7, fill=black, inner sep=2.7pt}] 
    	\tikzstyle{edges} = [thick];
    	
    	\node (a1) at (0,0) {};  
    	\node (a2) at (2,0)  {};
    	\node (a3) at (1,1.73)  {};
    	
    	\draw[edges] (a2) edge[bend left = 20] (a3);
    	\draw[edges] (a2) edge[bend right = 20] (a3);

    	\draw[edges] (a1) edge (a2);
    	\draw[edges] (a3) edge (a1);
    	\end{tikzpicture}
    	
    	\vspace{0.6cm}
    	
    	\qquad
    	\begin{tikzpicture}  
    	[scale=0.85,auto=center,every node/.style={circle,scale=0.7, fill=black, inner sep=2.7pt}] 
    	\tikzstyle{edges} = [thick];
    	
    	\node (a1) at (0,0) {};  
    	\node (a2) at (2,0)  {};
    	\node (a3) at (4,0)  {};
    	
    	\draw[edges] (a1) edge (a2);
    	\draw[edges] (a2) edge (a3);
    	\draw[edges] (a2) edge[bend left = 30] (a3);
    	\draw[edges] (a2) edge[bend right = 30] (a3);
    	\end{tikzpicture}
    	\qquad\qquad
    	\begin{tikzpicture}  
    	[scale=0.85,auto=center,every node/.style={circle,scale=0.7, fill=black, inner sep=2.7pt}] 
    	\tikzstyle{edges} = [thick];
    	
    	\node (a1) at (0,0) {};  
    	\node (a2) at (2,0)  {};
    	\node (a3) at (4,0)  {};
    	
    	\draw[edges] (a2) edge[bend left = 30] (a1);
    	\draw[edges] (a2) edge[bend right = 30] (a1);
    	\draw[edges] (a2) edge[bend left = 30] (a3);
    	\draw[edges] (a2) edge[bend right = 30] (a3);
    	\end{tikzpicture}\enspace
    	\begin{tikzpicture}  
    	[scale=0.85,auto=center,every node/.style={circle,scale=0.7, fill=black, inner sep=2.7pt}] 
    	\tikzstyle{edges} = [thick];
    	
    	\node (a1) at (0,0) {};  
    	\node (a2) at (2,0)  {};
    	\node (a3) at (1,1.73)  {};
    	
        \draw[edges] (a1) to[out=90,in=180,looseness=20] (a1);
    	\draw[edges] (a1) edge (a2);
    	\draw[edges] (a2) edge (a3);
    	\draw[edges] (a3) edge (a1);
    	\end{tikzpicture}
    	
    	\caption{Isomorphism classes of quasi series-parallel matroids on $[4]$ of rank~$2$.}\label{fig:quasi-series-parallel}
    \end{figure}
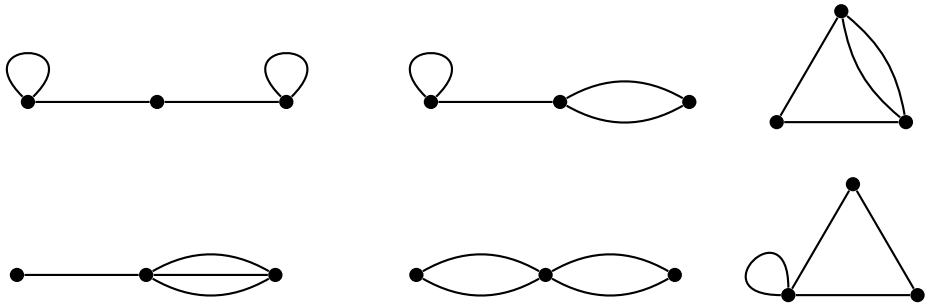
\end{example}

A recurring theme within combinatorics consists of passing from the enumeration of connected labelled structures to the enumeration of all labelled structures (whether connected or not). In particular, the theory of exponential generating functions is useful to this end. Moreover, in the case of our interest, equation~\eqref{eq:C-gen-fun} enumerates all connected quasi series-parallel matroids. If we want to enumerate all quasi series-parallel matroids, by a standard technique (see \cite[Section~5.1]{stanley-ec2} or \cite[p.~148]{flajolet-sedgewick}) it suffices to compose this function with an exponential.

\begin{proposition}\label{prop:gen-fun-A}
    Let $\mathcal{A}(n,k)$ denote the set of all quasi series-parallel matroids on $[n]$ of rank $k$. Then, the bivariate generating function 
    \[ A(x,y) := \sum_{n=1}^{\infty} \sum_{k=0}^n |\mathcal{A}(n,k)| \frac{x^n}{n!} \, y^k ,\]
    is given by
    \begin{equation}\label{eq:A-gen-fun}
        A(x,y) = \exp(C(x,y)),
    \end{equation}
    where $C(x,y)$ is given as in equation~\eqref{eq:C-gen-fun}.
\end{proposition}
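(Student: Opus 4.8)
The plan is to deduce the identity from the \emph{exponential formula} for exponential generating functions, viewing quasi series-parallel matroids as the assemblies whose connected pieces are series-parallel matroids. The starting point is the canonical decomposition of a matroid into its connected components: every matroid $\M$ on a ground set $E$ decomposes uniquely as $\M = \M_1 \oplus \dotsb \oplus \M_m$, where the ground sets $E_1, \dotsc, E_m$ of the connected matroids $\M_1, \dotsc, \M_m$ partition $E$, and this decomposition is unique up to reordering the blocks (see \cite{oxley}). Conversely, placing a connected matroid on each block of a set partition of $[n]$ reassembles into a matroid on $[n]$, and distinct choices produce distinct matroids. This is exactly the ``set partition into connected pieces'' structure that the exponential formula requires.

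Next I would identify the connected objects within our class. By definition $\M$ is quasi series-parallel precisely when each component $\M_i$ is a series-parallel matroid; since series-parallel matroids are connected (as recorded in Section~\ref{subsec:2.1}), the connected quasi series-parallel matroids are exactly the series-parallel matroids, whose labelled, rank-graded enumeration is recorded by $C(x,y)$ in \eqref{eq:C-gen-fun}. Hence the quasi series-parallel matroids on a labelled ground set form precisely the species of matroids all of whose connected components lie in the class enumerated by $C(x,y)$, which is the hypothesis needed to apply the exponential formula. The only additional ingredient is that the rank-tracking variable behaves multiplicatively: because $\rk(\M) = \sum_i \rk(\M_i)$, the weight $y^{\rk(\M)}$ factors across connected components, so the weighted form of the exponential formula applies verbatim.

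With these verifications in place, the exponential formula (\cite[Section~5.1]{stanley-ec2}, \cite[p.~148]{flajolet-sedgewick}) immediately gives that the weighted exponential generating function of all assemblies is the exponential of that of the connected pieces, i.e.\ $A(x,y) = \exp(C(x,y))$. The only genuine point of care—and the only real obstacle—is bookkeeping the empty matroid: since $C(x,y)$ has vanishing constant term, $\exp(C(x,y))$ carries a constant term $1$, corresponding to the unique (vacuously quasi series-parallel) matroid on the empty ground set. Interpreting the sum defining $A(x,y)$ so as to include this $n=0$ contribution makes the two sides agree exactly, and I would simply flag this convention. No other step requires a nontrivial argument, since all of the combinatorial content is carried by the uniqueness of the connected-component decomposition together with the additivity of rank.
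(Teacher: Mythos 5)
Your proposal is correct and follows essentially the same route as the paper, which likewise treats the result as an instance of the exponential formula (citing the same sources, \cite[Section~5.1]{stanley-ec2} and \cite[p.~148]{flajolet-sedgewick}) applied to the connected-component decomposition, with rank additivity making the $y$-weighting compatible. Your remark about the constant term (the $n=0$ convention reconciling $\exp(C(x,y))$ with the sum starting at $n=1$) is a fair point of bookkeeping that the paper glosses over, but it does not alter the substance of the argument.
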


\subsection{Simple quasi series-parallel matroids}

The last family of matroids playing a role in the statement of Theorem~\ref{thm:main-non-equivariant} is that of simple quasi series-parallel matroids.

Recall that a matroid $\M$ is \emph{simple} if it loopless and contains no circuits of size $2$. Two (possibly equal)  elements $i, j$ are said to be \emph{parallel} if they are not loops and $\rk_{\M}(\{i, j\}) = 1$. The ground set of $\M$ is partitioned into subsets called \emph{parallel classes}, which are the maximal subsets that consist of elements which are parallel to each other. The \emph{simplification} of $\M$, denoted $\overline{\M}$, is the matroid on the set of parallel classes of $\M$ with rank function 
\[ \rk_{\overline{\M}}(S) = \rk_{\M}\left(\textstyle\bigcup_{T \in S} T\right) \text{ for any S}.\]

\begin{example}
    There are exactly two isomorphism classes of simple quasi series-parallel matroids on $7$ elements of rank $4$. They are depicted in Figure~\ref{fig:simple-quasi-series-parallel}. There are $630$ matroids on $[7]$ isomorphic to the matroid on the left, and there are $105$ for the matroid on the right. These add up to $735$ simple quasi series-parallel matroids on $[7]$ of rank $4$ (see Table~\ref{table2}).
    \begin{figure}[ht]
        \centering
        \begin{tikzpicture}  
    	[scale=1.15,auto=center,every node/.style={circle,scale=0.7, fill=black, inner sep=2.7pt},every edge/.append style = {thick}] 
    	\tikzstyle{edges} = [thick];
        \graph  [empty nodes, clockwise, radius=1em,
        n=9, p=0.3] 
            { subgraph C_n [n=5,clockwise,radius=1.5cm,name=A]};
            
		\draw[edges] (A 4) edge (A 1);
		\draw[edges] (A 4) edge (A 2);
    \end{tikzpicture}\qquad\qquad
    	\begin{tikzpicture}  
    	[scale=0.95,auto=center,every node/.style={circle,scale=0.7, fill=black, inner sep=2.7pt}] 
    	\tikzstyle{edges} = [thick];
    	
    	\node (a1) at (0,0) {};  
    	\node (a2) at (2,0)  {};
    	\node (a3) at (2,2)  {};  
    	\node (a4) at (0,2) {};
    	\node (a5) at (1,1) {};
    	
    	\draw[edges] (a1) edge (a2);
    	\draw[edges] (a2) edge (a3);
    	\draw[edges] (a1) edge (a5);
    	\draw[edges] (a5) edge (a3);
    	\draw[edges] (a1) edge[bend left = 30] (a3);
    	\draw[edges] (a3) edge (a4);
    	\draw[edges] (a4) edge (a1);
    	\end{tikzpicture}
    	\caption{Isomorphism classes of simple quasi series-parallel matroids on $[7]$ of rank $4$.}\label{fig:simple-quasi-series-parallel}
    \end{figure}
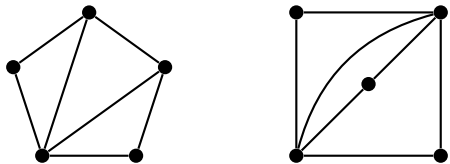
\end{example}

\begin{proposition}\label{prop:rankbound}
If $n > 1$, then a simple quasi series-parallel matroid on $[n-1]$ has rank greater than $\frac{n-1}{2}$. 
\end{proposition}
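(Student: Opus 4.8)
The plan is to reduce to a single connected component and then translate the claim into a graph-theoretic edge count. Write $m = n-1 \ge 1$ and let $\M$ be a simple quasi series-parallel matroid on $[m]$ of rank $r$. By definition $\M$ is the direct sum of its connected components $\M_1, \dotsc, \M_k$, each of which is series-parallel; moreover each $\M_i$ is a restriction of $\M$ and hence inherits simplicity (the absence of loops and of parallel pairs). Since rank and ground-set size are additive over direct sums, it suffices to prove the sharper per-component bound
\[ 2\,\rk(\M_i) \ge |E(\M_i)| + 1 \qquad \text{for each } i, \]
because summing over $i$ gives $2r = \sum_i 2\,\rk(\M_i) \ge \sum_i\bigl(|E(\M_i)|+1\bigr) = m + k \ge m+1$, using $k \ge 1$; this yields $r > \tfrac{m}{2} = \tfrac{n-1}{2}$.

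First I would dispose of the case $|E(\M_i)| = 1$: then $\M_i$ has no loops, so it is a coloop, $\rk(\M_i)=1$, and the bound reads $2 \ge 2$. So assume $\M_i$ has at least two elements. Here I would use that series-parallel matroids are graphic: by their recursive definition they are built from $\U_{1,2}$ by series and parallel extensions, which correspond to edge subdivision and edge doubling and therefore preserve both graphicness and $2$-connectivity (the starting matroid $\U_{1,2}$ is the cycle matroid of two parallel edges). Hence $\M_i$ can be realized as the cycle matroid of a $2$-connected graph $G$ on $v$ vertices with $|E(\M_i)|$ edges, and $\rk(\M_i) = v-1$. Since $\M_i$ is simple, $G$ has no loops and no parallel edges, so $G$ is a simple graph; and since $\M_i$ contains no $\K_4$ minor, $G$ has no $K_4$ minor.

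The crux is then the classical edge bound for $K_4$-minor-free (equivalently, series-parallel) graphs: a simple graph on $v \ge 2$ vertices with no $K_4$ minor has at most $2v-3$ edges, with equality exactly for the maximal series-parallel graphs. Applying this to $G$ gives
\[ |E(\M_i)| \le 2v - 3 = 2\bigl(\rk(\M_i)+1\bigr) - 3 = 2\,\rk(\M_i) - 1, \]
which is precisely the desired per-component inequality.

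I expect the main obstacle to be expository rather than conceptual, namely making the dictionary between a connected simple series-parallel matroid and a $2$-connected simple series-parallel graph (with $\rk = v-1$) fully rigorous, and confirming that the realization may be chosen simple. The edge bound itself is standard and, if one prefers a self-contained argument, can be proved by induction on $v$: every series-parallel graph on $\ge 3$ vertices has a vertex of degree at most $2$, and smoothing a degree-$2$ vertex (deleting any parallel edge this creates) reduces $v$ by one while reducing the edge count by at least one, so $|E(G)| \le 2v-3$ follows from the base case $v=2$.
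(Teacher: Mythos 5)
Your proof is correct, and it reaches the conclusion by a genuinely different route from the paper's. Both proofs begin with the same reduction to connected components (each component of a simple quasi series-parallel matroid is a simple series-parallel matroid, and rank and ground-set size are additive over direct sums). From there the paper finishes in one sentence, staying entirely inside the recursive matroid construction: building a connected series-parallel matroid from $\U_{1,1}$, every series extension raises rank and size by one while every parallel extension raises only size, and simplicity forces at least as many series as parallel extensions --- which is precisely your per-component inequality $|E(\M_i)| \le 2\rk(\M_i)-1$ in disguise. You instead pass to graphs: you realize each component as the cycle matroid of a connected simple graph with no $K_4$ minor and quote the classical Dirac bound that such a graph on $v$ vertices has at most $2v-3$ edges. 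This costs more supporting machinery --- the graphicness of series-parallel matroids via the subdivision/doubling dictionary, and the edge bound itself, whose inductive proof via a degree-$2$ vertex is essentially the paper's counting argument transplanted to graphs --- but everything you invoke is standard, and in exchange the equality case becomes visible: equality in the edge bound characterizes $2$-trees, consistent with Proposition~\ref{prop:evensize}, where the extremal matroids (rank $k$ on $[2k-1]$) are exactly the graphic matroids of the triangle-trees coming from triangular cacti. One small remark: the $2$-connectivity you establish is never actually needed; connectivity already gives $\rk(\M_i)=v-1$, and the edge bound requires only simplicity and $K_4$-minor-freeness.
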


\begin{proof}
We may reduce to the case of simple (connected) series-parallel matroids on nonempty ground sets. Then the result follows from the observation that in order to obtain a simple matroid from a sequence of series and parallel extensions of $\U_{1,1}$, we must have done at least as many series extensions as parallel extensions. 
\end{proof}

\begin{table}[htb]
    \begin{subtable}[t]{.5\textwidth}
        \caption{Simple quasi series-parallel matroids on $[n]$ of rank $k$}
        \raggedright
        \begin{tabular}{l l l l l l l l l}\hline
             $k\backslash n=$    & 1   & 2 & 3 & 4 & 5 & 6 & 7 & 8 \\ \hline
            0 & 0 & 0 & 0 & 0 & 0 & 0 & 0 & 0\\
            1 & 1 & 0 & 0 & 0 & 0 & 0 & 0 & 0\\
            2 &  & 1 & 1 & 0 & 0 & 0 & 0 & 0\\
            3 &  &  & 1 & 5 & 15 & 0 & 0 & 0\\
            4 &  &  &  & 1 & 16 & 175 & 735 & 0\\
            5 &  &  &  &  & 1 & 42 & 1225 & 16065\\
            6 &  &  &  &  &  & 1 & 99 & 6769\\
            7 &  &  &  &  &  &  & 1 & 219\\
            8 &  &  &  &  &  &  &  & 1
        \end{tabular}
    \end{subtable}
    \caption{Enumeration of simple quasi series-parallel matroids.}\label{table2}
\end{table}

We now study simple quasi series-parallel matroids of the smallest possible rank. We begin with the case of ground sets of odd size. By Proposition~\ref{prop:rankbound}, a simple quasi series-parallel matroid of rank $k$ on $[2k-1]$ is connected, and thus is a series-parallel matroid. 
A \emph{triangular cactus} is a connected graph where every edge is contained in a unique cycle, and that cycle is a triangle.

\begin{proposition}\label{prop:evensize}
There is a bijection between simple series-parallel matroids of rank $k$ on $[2k-1]$ and triangular cacti with vertex set $[2k-1]$. 
\end{proposition}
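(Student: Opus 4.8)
The plan is to set up an explicit bijection through the set of rank-$2$ circuits (the triangles) of the matroid. Throughout assume $k \ge 2$; the case $k=1$ is immediate, as both sides consist of a single object ($\U_{1,1}$ and a single vertex). As already observed before the statement, \Cref{prop:rankbound} forces a simple series-parallel matroid $\M$ of rank $k$ on $[2k-1]$ to be connected, so it is a genuine series-parallel matroid. In particular $\M$ is graphic, and being connected and simple it is the cycle matroid of a $2$-connected simple graph $G$ with $V = k+1$ vertices and $E = 2k-1 = 2V-3$ edges.

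First I would pin down the structure of $G$. A loopless graph with no $\K_4$-minor has at most $2V-3$ edges, with equality precisely for the edge-maximal such graphs, which are exactly the $2$-trees. Since $G$ is series-parallel (no $\K_4$-minor) and attains the extremal bound $E = 2V-3$, it is a $2$-tree: it is built from a single triangle by repeatedly adding a vertex adjacent to the two endpoints of an existing edge. From this construction one reads off that $G$ has exactly $k-1$ triangles, that every edge lies in at least one triangle, and that the bipartite incidence graph between triangles and edges is connected; as it has $(k-1)+(2k-1)$ nodes and $3(k-1)$ arcs, being connected forces it to be a tree.

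Next I would transport this to the matroid and make it intrinsic. The triangles of $G$ are exactly the three-element circuits of $\M$, so the collection $\mathcal{T}(\M)$ of rank-$2$ circuits is an intrinsic invariant: it consists of $k-1$ three-element subsets of $[2k-1]$ that cover $[2k-1]$, pairwise meet in at most one element, and have tree incidence. Call such a collection a \emph{triangular hypertree}. I claim $\M$ is recovered from $\mathcal{T}(\M)$ as the iterated parallel connection of the triangles $\U_{2,3}$ along the incidence tree, identifying the common ground-set elements. Indeed, deleting a simplicial degree-$2$ vertex of $G$ exhibits the cycle matroid as the parallel connection of a smaller $2$-tree matroid with a triangle glued at a single element, so by induction $\M = P(T_1,\dots,T_{k-1})$. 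A direct computation of circuits of a parallel connection shows that gluing two triangles at one element creates, besides the two triangles, only circuits of size $\ge 4$; hence $\M$ is simple, has rank $2(k-1)-(k-2)=k$ on $3(k-1)-(k-2)=2k-1$ elements, and its set of $3$-circuits is exactly $\{T_1,\dots,T_{k-1}\}$. Conversely, applying this parallel connection to an arbitrary triangular hypertree produces a simple series-parallel matroid of rank $k$ on $[2k-1]$ with that hypertree as its set of $3$-circuits. Thus $\M \mapsto \mathcal{T}(\M)$ is a bijection from simple series-parallel matroids of rank $k$ on $[2k-1]$ onto triangular hypertrees.

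Finally I would observe that triangular cacti on $[2k-1]$ are in canonical bijection with the very same triangular hypertrees: a triangular cactus is precisely the union of its triangular blocks, hence is determined by the set of vertex-triples spanning those triangles, and the cactus condition (connected, with every block a triangle) is equivalent to requiring that these $k-1$ triples cover $[2k-1]$ and have connected, hence tree, incidence. Composing the two bijections yields the desired correspondence. The main obstacle is the structural step: showing that $\M$ must be a $2$-tree matroid and, equivalently, that its $3$-circuits have acyclic incidence. This is exactly where excluding the $\K_4$-minor is indispensable, since a $\K_4$ would allow three triangles to close up into a cycle, destroying both the tree structure of the incidence and the match with cacti.
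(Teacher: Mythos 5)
Your proof is correct, but it reaches the conclusion by a genuinely different route than the paper. The underlying bijection is the same: your map sending $\M$ to the union of triangles on its $3$-element circuits is exactly the paper's $T(\M)$, and your iterated parallel connection of copies of $\U_{2,3}$ is the graphic matroid of the paper's $S(G)$. The paper, however, proves that these maps are mutually inverse by a direct induction matching the two recursive structures: every triangular cactus has a pendant triangle with two degree-$2$ vertices, and (via Proposition~\ref{prop:rankbound}) every simple series-parallel matroid of rank $k$ on $[2k-1]$, $k \ge 2$, arises from a rank $k-1$ simple series-parallel matroid on $[2k-3]$ by a parallel extension followed by a series extension; stripping these off reduces both sides to the case $k-1$. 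You instead argue structurally: you realize $\M$ as the cycle matroid of a simple $2$-connected graph on $k+1$ vertices with $2(k+1)-3$ edges, invoke Dirac's extremal theorem (a simple graph with no $\K_4$-minor has at most $2V-3$ edges, with equality exactly for $2$-trees) to identify $G$ as a $2$-tree, and then factor the bijection through an intermediate object --- your ``triangular hypertrees,'' the sets of $3$-circuits with tree incidence --- matched on the other side against the blocks of a triangular cactus. Both arguments are sound. The paper's induction is shorter, self-contained, and purely matroid-theoretic; your version leans on heavier outside inputs (graphicness of series-parallel matroids, Dirac's theorem, the circuit description of parallel connections, block decomposition of cacti), but in exchange it gives an intrinsic characterization of which set systems occur as the $3$-circuits, which makes the correspondence with cacti transparent and the inverse map canonical rather than recursively defined. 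Two small points you should make explicit in a write-up: the bound $2V-3$ requires $G$ simple (not merely loopless), which is fine here since $\M$ is simple; and the order-independence (associativity) of the iterated parallel connection along the incidence tree, which is what makes ``$\M$ is recovered from $\mathcal{T}(\M)$'' well defined.
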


Before giving the bijection, we observe that both triangular cacti and rank $k$ simple series-parallel matroids on $[2k-1]$ are built inductively. The triangles in a triangular cactus are arranged in a tree-like fashion, so every triangular cactus with at least three vertices contains a triangle which has at least two vertices of degree $2$. When we delete those two vertices, we get a triangular cactus on a smaller ground set. Every rank $k$ simple series-parallel matroid on $[2k-1]$, with $k \ge 2$, is a series extension of a series-parallel matroid of rank $k-1$ on ground set of size $2k-2$. By Proposition~\ref{prop:rankbound}, such a matroid cannot be simple, and so it is a parallel extension of a rank $k-1$ simple series-parallel matroid on a ground set of size $2k-3$.

\begin{proof}
To a simple series-parallel matroid $\M$ of rank $k$ on $[2k-1]$, we associate the graph $T(\M)$ with vertex set $[2k-1]$ and edges $(a, b)$ if $\{a, b\}$ is contained in a $3$-element circuit of $\M$. Note that if we do a parallel extension and then a series extension at $i \in [2k-1]$ to $\M$ to obtain $\widetilde{\M}$, we add a triangle to $T(\M)$ with vertices $i, 2k$, and $2k+1$. In particular, $T(\widetilde{\M})$ is a triangular cactus by induction. 

To a triangular cactus $G$ with vertex set $[2k-1]$, we build a graph whose edge set is $[2k-1]$ by adding a triangle with edges $\{a, b, c\}$ for each triangle with vertices $\{a, b, c\}$ in $G$. 
That there is such a graph follows from the inductive structure of triangular cacti. 
We obtain a matroid $S(G)$ by taking the graphic matroid of this graph. Note that if $G$ is obtained from a triangular cactus $G'$ on a vertex set of size $2k-3$ by adding a triangle, then $S(G)$ is obtained from $S(G')$ by doing a parallel extension and then a series extension. We see by induction that $S(G)$ is independent of the choice of graph and is a rank $k$ simple series-parallel matroid. Similarly, it follows from induction that $T(S(G)) = G$. 
To check that $S(T(\M)) = \M$, we again use that both constructions are compatible with the inductive structure of series-parallel matroids and triangular cacti. 
\end{proof}

    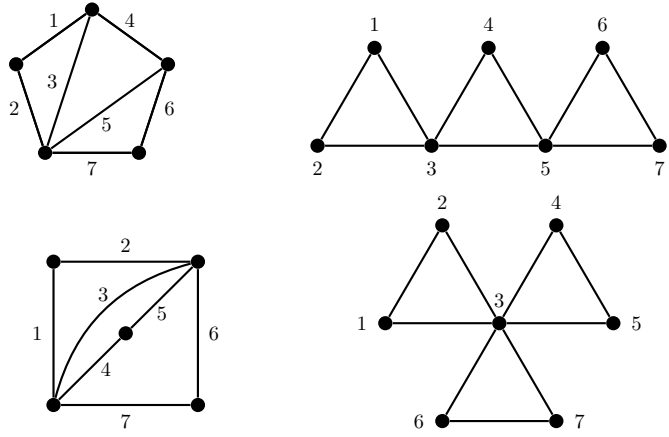
\begin{figure}[ht]
        \centering
        \begin{tikzpicture}  
        [scale=1.15,auto=center,every node/.style={circle,scale=0.7, fill=black, inner sep=2.7pt},every edge/.append style = {thick}]
        \tikzstyle{edges} = [thick];
        \graph  [empty nodes, clockwise, radius=1em,
        n=9, p=0.3] 
            { subgraph C_n [n=5,clockwise,radius=1.5cm,name=A]};
            
        \draw[edges] (A 1) edge node[opacity=0,text opacity=1,above] {$4$} (A 2);
        \draw[edges] (A 2) edge node[opacity=0,text opacity=1,right] {$6$} (A 3);
        \draw[edges] (A 3) edge node[opacity=0,text opacity=1,below] {$7$} (A 4);
        \draw[edges] (A 4) edge node[opacity=0,text opacity=1,left] {$2$} (A 5);
        \draw[edges] (A 5) edge node[opacity=0,text opacity=1,above] {$1$} (A 1);
        \draw[edges] (A 4) edge node[opacity=0,text opacity=1,left] {$3$} (A 1);
        \draw[edges] (A 4) edge node[opacity=0,text opacity=1,below] {$5$} (A 2);

    \end{tikzpicture}\qquad\qquad
            \begin{tikzpicture}  
        [scale=0.75,auto=center,every node/.style={circle,scale=0.7, fill=black, inner sep=2.7pt},every edge/.append style = {thick}] 
        \tikzstyle{edges} = [thick];
        \node[label=below:$2$] (a1) at (0,0) {};
        \node[label=below:$3$]  (a2) at (2,0)  {};
        \node[label=above:$1$] (a3) at (1,1.73)  {};  
        \node[label=below:$5$] (a4) at (4,0) {};
        \node[label=above:$4$] (a5) at (3, 1.73) {};
        \node[label=below:$7$] (a6) at (6, 0) {};
        \node[label=above:$6$] (a7) at (5, 1.73) {};
        \draw[edges] (a1) edge (a2);
        
        \draw[edges] (a2) edge (a3);
        \draw[edges] (a1) edge (a3);
        \draw[edges] (a2) edge (a4);
        \draw[edges] (a2) edge (a5);
        \draw[edges] (a4) edge (a5);
        \draw[edges] (a4) edge (a6);
        \draw[edges] (a4) edge (a7);
        \draw[edges] (a6) edge (a7);
    \end{tikzpicture}

    \begin{tikzpicture}  
    	[scale=0.95,auto=center,every node/.style={circle,scale=0.7, fill=black, inner sep=2.7pt}] 
    	\tikzstyle{edges} = [thick];
    	
    	\node (a1) at (0,0) {};  
    	\node (a2) at (2,0)  {};
    	\node (a3) at (2,2)  {};  
    	\node (a4) at (0,2) {};
    	\node (a5) at (1,1) {};
    	
    	\draw[edges] (a1) edge node[opacity=0,text opacity=1,below] {$7$} (a2);
    	\draw[edges] (a2) edge node[opacity=0,text opacity=1,right] {$6$} (a3);
    	\draw[edges] (a1) edge node[opacity=0,text opacity=1,right] {$4$} (a5);
    	\draw[edges] (a5) edge node[opacity=0,text opacity=1,below] {$5$}  (a3);
    	\draw[edges] (a1) edge[bend left = 30] node[opacity=0,text opacity=1,above] {$3$} (a3);
    	\draw[edges] (a3) edge node[opacity=0,text opacity=1,above] {$2$} (a4);
    	\draw[edges] (a4) edge node[opacity=0,text opacity=1,left] {$1$} (a1);
    	\end{tikzpicture}\qquad\qquad
    	\begin{tikzpicture}  
    	[scale=0.75,auto=center,every node/.style={circle,scale=0.7, fill=black, inner sep=2.7pt}] 
    	\tikzstyle{edges} = [thick];
    	
    	\node[label=above:$3$] (a0) at (0,0) {};
    	\node[label=right:$5$] (a1) at (2,0) {};  
    	\node[label=above:$4$] (a2) at (1,1.73)  {};
    	\node[label=above:$2$] (a3) at (-1,1.73)  {};
    	\node[label=left:$1$] (a4) at (-2,0)  {};
    	\node[label=left:$6$] (a5) at (-1,-1.73)  {};
    	\node[label=right:$7$] (a6) at (1,-1.73)  {};
    	
    	\draw[edges] (a1) -- (a2);
    	\draw[edges] (a3) -- (a4);
    	\draw[edges] (a5) -- (a6);
    	\draw[edges] (a0) -- (a2);
    	\draw[edges] (a0) -- (a3);
    	\draw[edges] (a0) -- (a4);
    	\draw[edges] (a0) -- (a5);
    	\draw[edges] (a0) -- (a6);
    	\draw[edges] (a0) -- (a1);
    	\end{tikzpicture}
        \caption{Two simple series-parallel matroids of rank $4$ on $[7]$ and their corresponding triangular cacti.}\label{fig:cacti}
\end{figure}

In \cite{Bahrani}, the authors show that the number of triangular cacti with vertex set $[2k-1]$ is $(2k-3)!! (2k-1)^{k-2}$. We therefore have the following result, which together with Theorem~\ref{thm:main-non-equivariant} proves a conjecture stated in \cite[Section A]{elias-proudfoot-wakefield}.

\begin{corollary}
    The number of simple quasi series-parallel matroids of rank $k$ on $[2k-1]$ is equal to $(2k-3)!! (2k-1)^{k-2}$.
\end{corollary}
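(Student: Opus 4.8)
The plan is to reduce the enumeration of the set $\mathcal{S}(2k-1, k)$ of simple quasi series-parallel matroids of rank $k$ on $[2k-1]$ to two facts already in hand: the explicit bijection of Proposition~\ref{prop:evensize} and the count of triangular cacti from \cite{Bahrani}.

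First I would argue that the qualifier ``quasi'' is redundant in this extremal regime, i.e.\ that every member of $\mathcal{S}(2k-1,k)$ is in fact a connected series-parallel matroid. To see this, write such a matroid as the direct sum of its connected components, with ground set sizes $m_1, \dots, m_c$ and ranks $r_1, \dots, r_c$, so that $\sum_i m_i = 2k-1$ and $\sum_i r_i = k$. Each component is a simple series-parallel matroid, so Proposition~\ref{prop:rankbound} gives $m_i \le 2r_i - 1$ for every $i$ (the strict rank inequality for integers). Summing over $i$ yields $2k - 1 = \sum_i m_i \le 2\sum_i r_i - c = 2k - c$, whence $c \le 1$; since the ground set is nonempty we have $c = 1$, so the matroid is connected and therefore series-parallel. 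Consequently $\mathcal{S}(2k-1,k)$ coincides with the set of simple series-parallel matroids of rank $k$ on $[2k-1]$, which is exactly the domain of the bijection in Proposition~\ref{prop:evensize}.

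Next I would apply Proposition~\ref{prop:evensize} to identify this set with the set of triangular cacti on vertex set $[2k-1]$, and then quote \cite{Bahrani}, according to which the number of such cacti is $(2k-3)!!\,(2k-1)^{k-2}$. Composing these identifications yields the claimed formula. I do not expect a genuine obstacle: the two substantive ingredients---the bijection and the cactus enumeration---are taken as given, and the only point demanding care is the ``quasi $\Rightarrow$ connected'' reduction above. This short consequence of Proposition~\ref{prop:rankbound} is nonetheless worth recording explicitly, since it is precisely what allows the conclusion to be phrased for the full class $\mathcal{S}(2k-1,k)$ featuring in Theorem~\ref{thm:main-non-equivariant}, rather than only for connected matroids, and hence to be read off directly as the leading-coefficient statement for $P_{\K_{2k}}(t)$.
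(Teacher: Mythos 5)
Your proposal is correct and follows essentially the same route as the paper: connectedness in this extremal regime via Proposition~\ref{prop:rankbound} (which the paper asserts in the sentence preceding Proposition~\ref{prop:evensize}), then the bijection with triangular cacti, then the enumeration from \cite{Bahrani}. Your only addition is to spell out the component-summing argument $2k-1 = \sum_i m_i \le 2\sum_i r_i - c = 2k - c$ that the paper leaves implicit, which is a worthwhile clarification but not a different proof.
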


Let $E_k$ be the number of simple series-parallel matroids of rank $k + 1$ on $[2k]$. Then 
$$(E_0, E_1, \dotsc) = (0, 1, 75, 9345, 1865745, 554479695, 231052877055, 128938132548225, \dotsc).$$
We do not know a simple expression for $E_k$; the $E_k$ can have large prime factors. After we prove Theorem~\ref{thm:main-non-equivariant}, the following result will give an expression for the leading term of $P_{\K_{2k+1}}(t)$ in terms of $E_k$. 

\begin{proposition}\label{prop:odd}
The number of simple quasi series-parallel matroids of rank $k + 1$ on $[2k]$ is equal to 
$$E_k + \frac{1}{2} \sum_{a=0}^{k-1} \binom{2k}{2a+1} (2a -1)!! (2k - 2a - 3)!! (2a + 1)^{a-1}  (2k - 2a - 1)^{k - a - 2}.$$
\end{proposition}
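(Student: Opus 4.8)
The plan is to stratify the simple quasi series-parallel matroids of rank $k+1$ on $[2k]$ by their number of connected components and to show that this number is at most two. Writing such a matroid as a direct sum of its connected components $\M_1, \dots, \M_c$, with $|E(\M_i)| = m_i$ and $\rk(\M_i) = r_i$, each $\M_i$ is a nonempty simple series-parallel matroid (simplicity is inherited by restriction, and the components of a quasi series-parallel matroid are series-parallel), so Proposition~\ref{prop:rankbound} gives $2 r_i \geq m_i + 1$. Summing over $i$ and using $\sum_i m_i = 2k$ and $\sum_i r_i = k+1$ yields
\[
2(k+1) = \sum_i 2 r_i \geq \sum_i (m_i + 1) = 2k + c,
\]
so $c \le 2$. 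The matroids with $c = 1$ are exactly the connected simple series-parallel matroids of rank $k+1$ on $[2k]$, of which there are $E_k$ by definition, accounting for the first summand.

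For the matroids with $c = 2$, the inequality above is forced to be an equality, so $2 r_i = m_i + 1$ for $i = 1, 2$. Thus each component has an odd ground set, of sizes I would write as $2a+1$ and $2k - 2a - 1$ for some $0 \le a \le k-1$, and has the minimal possible rank $a+1$ and $k-a$ respectively. A connected simple series-parallel matroid of minimal rank on an odd ground set of size $2b-1$ is precisely the kind counted by Proposition~\ref{prop:evensize}: combined with the enumeration of triangular cacti from \cite{Bahrani}, there are exactly $(2b-3)!!\,(2b-1)^{b-2}$ of them on a given labeled ground set of that size. Taking $b = a+1$ and $b = k-a$ gives $(2a-1)!!\,(2a+1)^{a-1}$ matroids on the first block and $(2k-2a-3)!!\,(2k-2a-1)^{k-a-2}$ on the second.

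To assemble these into a count, I would first count ordered data: a choice of the first block (of size $2a+1$), which can be made in $\binom{2k}{2a+1}$ ways, together with a minimal-rank matroid on each of the two blocks. Summing the product over $a$ from $0$ to $k-1$ produces the sum appearing in the statement. Since a two-component matroid is an unordered direct sum whose two components have disjoint, nonempty ground sets, swapping the two blocks never fixes the matroid, so every two-component matroid is counted exactly twice by this ordered sum --- once for each choice of which component is designated ``first'' --- independently of whether the two block sizes agree. Dividing by $2$ gives the number of two-component matroids, and adding $E_k$ completes the proof.

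The step demanding the most care is the over-counting analysis in the last paragraph, together with the boundary conventions: I must check that the extreme terms $a = 0$ and $a = k-1$ (where a component is the single coloop $\U_{1,1}$) are handled correctly, which amounts to verifying that the negative exponents $a-1$ and $k-a-2$ occur only when their respective bases equal $1$, and that $(-1)!! = 1$, so that every factor is well defined and equals the correct count. The conceptual heart of the argument --- the bound $c \le 2$ and the resulting reduction to the odd-ground-set case already resolved by Proposition~\ref{prop:evensize} --- is then immediate.
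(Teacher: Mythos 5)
Your proposal is correct and takes essentially the same approach as the paper: bound the number of connected components by two using Proposition~\ref{prop:rankbound}, count the connected case by $E_k$, and reduce the two-component case to Proposition~\ref{prop:evensize}, with the factor $\frac{1}{2}$ compensating for the ordered choice of blocks. The paper's own proof is just a terser version of this argument, leaving the summation bound, the overcounting analysis, and the boundary conventions (such as $(-1)!!=1$) implicit, all of which you verify explicitly.
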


\begin{proof}
    By Proposition~\ref{prop:rankbound}, a simple quasi series-parallel matroid of rank $k+1$ on a ground set of size $2k$ can have at most two connected components. The number with one connected component is exactly $E_k$. If there are two connected components, then Proposition~\ref{prop:rankbound} implies that one connected component must have size $2a +1$ and rank $a + 1$ for some $a$, and the other component must have size $2(k-a) - 1$ and rank $k -a$. Proposition~\ref{prop:evensize} then implies the result. 
\end{proof}

Let $\mathcal{F}$ be a family of isomorphism classes of matroids. Denote $\mathcal{F}_{\simple}$ and $\mathcal{F}_{\loopless}$ the subclasses of all of simple and of all loopless matroids in $\mathcal{F}$, respectively. Let us write $|\mathcal{F}(n,k)|$ for the number of rank $k$ matroids on $[n]$ whose isomorphism class lies in $\mathcal{F}$, and similarly for $\mathcal{F}_{\simple}$ and $\mathcal{F}_{\loopless}$. 

Denote by $[\M]$ the isomorphism class of the matroid $\M$. Let us assume that $[\U_{0,0}]\in \mathcal{F}$ and that $\mathcal{F}$ has the property that $[\M]$ lies in $\mathcal{F}$ if and only if the class of the simplification $[\overline{\M}]$ lies in $\mathcal{F}$. This mild assumption allows us to establish the following elementary relationships:
    \begin{align*}
        |\mathcal{F}(n,k)| &= \sum_{i=k}^n \binom{n}{i} |\mathcal{F}_{\loopless}(i,k)|,\\
        |\mathcal{F}_{\loopless}(n,k)| &= \sum_{i=k}^n \stirlingtwo{n}{i} |\mathcal{F}_{\simple}(i,k)|,
    \end{align*}
where $\tstirlingtwo{n}{i}$ denotes the Stirling number of the second kind. Notice that the family $\mathcal{A}$ of all isomorphism classes of quasi series-parallel matroids satisfies this condition.

\begin{proposition}\label{prop:gen-fun-S}
    Let $\mathcal{S}(n,k)$ denote the set of all simple quasi series-parallel matroids on $[n]$ of rank $k$. Then, the bivariate generating function 
    \[ S(x,y) := \sum_{n=1}^{\infty} \sum_{k=0}^n |\mathcal{S}(n,k)| \frac{x^n}{n!} \, y^k ,\]
    is given by
    \begin{equation}\label{eq:S-gen-fun}
        S(x,y) = \frac{1}{x+1} A(\log(x+1),y) - 1,
    \end{equation}
    where $A(x,y)$ is given as in equation~\eqref{eq:A-gen-fun}.
\end{proposition}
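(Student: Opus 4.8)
The plan is to derive the generating function $S(x,y)$ for simple quasi series-parallel matroids from the already-established generating function $A(x,y)$ for all quasi series-parallel matroids by inverting the combinatorial relationship between a matroid, its loopless version, and its simplification. The key structural input is the pair of identities displayed just before the statement, which relate $|\mathcal{F}(n,k)|$, $|\mathcal{F}_{\loopless}(n,k)|$, and $|\mathcal{F}_{\simple}(n,k)|$ for any family $\mathcal{F}$ closed under simplification; since $\mathcal{A}$ is such a family, these identities apply. The first identity (adding loops, governed by binomial coefficients) corresponds to multiplication by $e^x$ at the level of exponential generating functions, and the second (inflating a simple matroid into a loopless one by replacing each element with a nonempty parallel class, governed by Stirling numbers of the second kind) corresponds to the substitution $x \mapsto e^x - 1$. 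My goal is therefore to express $A(x,y)$ in terms of the simple generating function $S(x,y)$, and then solve for $S$.

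First I would set up the three exponential generating functions carefully, tracking the rank variable $y$ as an inert parameter throughout, since all of these operations (adding loops, inflating parallel classes) preserve rank. Adding loops does not change the rank, so the loopless-to-all identity $|\mathcal{A}(n,k)| = \sum_i \binom{n}{i}|\mathcal{A}_{\loopless}(i,k)|$ becomes
\begin{equation*}
    A(x,y) = e^{x} \cdot A_{\loopless}(x,y),
\end{equation*}
where $A_{\loopless}(x,y)$ is the EGF of loopless quasi series-parallel matroids. Next, the simple-to-loopless identity $|\mathcal{A}_{\loopless}(n,k)| = \sum_i \stirlingtwo{n}{i}|\mathcal{A}_{\simple}(i,k)|$ reflects partitioning the ground set $[n]$ into $i$ nonempty parallel classes and placing a simple matroid on those classes; because the EGF whose coefficients are Stirling numbers of the second kind is the composition with $e^x - 1$, this yields
\begin{equation*}
    A_{\loopless}(x,y) = S(e^{x}-1,\,y),
\end{equation*}
where $S$ is defined as in the statement (noting $S$ records simple matroids, and $\mathcal{A}_{\simple}=\mathcal{S}$). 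I would make one bookkeeping remark here: the empty matroid $\U_{0,0}$ has been included in $\mathcal{A}$ (hence contributes the constant $1$ in these compositions), whereas $S(x,y)$ as defined in the proposition sums only over $n \ge 1$ and so omits this constant term; this accounts for the $-1$ and the normalization appearing in the final formula.

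Combining the two displays gives $A(x,y) = e^{x}\, S(e^{x}-1,y)$ up to the empty-matroid correction, and the last step is simply to invert this. Substituting $x \mapsto \log(x+1)$ (so that $e^x - 1 \mapsto x$ and $e^x \mapsto x+1$) and solving for $S$ produces
\begin{equation*}
    S(x,y) = \frac{1}{x+1}\,A(\log(x+1),y) - 1,
\end{equation*}
which is exactly equation~\eqref{eq:S-gen-fun}; the subtracted $1$ removes the empty-matroid term so that the sum begins at $n=1$ as required. I expect the only genuinely delicate point to be this last bookkeeping step — verifying that the constant terms and the treatment of the loop/empty cases are handled consistently across the two compositional identities, so that the additive and multiplicative corrections line up precisely to give the stated $-1$ and the prefactor $\tfrac{1}{x+1}$. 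The substitutions themselves are the standard dictionary between the operations "add an arbitrary set of loops" ($\times\, e^x$) and "inflate each point to a nonempty class" ($x \mapsto e^x-1$), so once the edge cases are pinned down the formula follows immediately.
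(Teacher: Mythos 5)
Your proposal is correct and takes essentially the same route as the paper: the paper's proof likewise translates the two displayed counting identities into the exponential-generating-function dictionary (adding loops $\leftrightarrow$ multiplication by $e^x$, inflating parallel classes $\leftrightarrow$ substitution $x \mapsto e^x - 1$) and then inverts via the change of variable $u = \log(x+1)$. If anything, your handling of the empty-matroid bookkeeping is more careful than the paper's, whose intermediate relation ``$A(x,y) = e^{-x}\, S(e^x-1,y)$'' contains a typo --- with $A = \exp(C(x,y))$ carrying the constant term $1$ for $\U_{0,0}$, the correct relation is $A(x,y) = e^{x}\bigl(S(e^x-1,y)+1\bigr)$, exactly as your corrected combination gives --- though this does not affect the validity of the final formula.
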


\begin{proof}
    This is a formal consequence of the fact that 
    \[ |\mathcal{A}(n,k)| = \sum_{i=k}^n \binom{n}{i}\sum_{j=k}^i \stirlingtwo{i}{j} |\mathcal{S}(j,k)|.\]
    It is possible to translate this expression in terms of generating functions. More precisely, one obtains:
    \[ A(x,y) = e^{-x} \, S(e^x-1,y).\]
    Using the change of variable $u=\log(x+1)$ yields the result of the statement.
\end{proof}

\begin{remark}\label{rem:moon}
    There is a large literature on enumerating various objects which are closely related to (quasi) series-parallel matroids. In the work of Moon \cite{Moon}, an asymptotic estimate of the general term of the single variable series $C(x,1)$, defined in equation~\eqref{eq:C-gen-fun}, was obtained. In particular, one may apply standard techniques to obtain asymptotic estimates for the coefficients of the series $A(x,1)$ and $S(x,1)$. Notice that our Theorem~\ref{thm:main-non-equivariant} implies that the coefficients of these series are described by the $Z$-polynomial and the Kazhdan--Lusztig polynomial, both evaluated at $t=1$. In particular, that provides asymptotics for the total dimensions of the intersection cohomology of $\mathsf{K}_n$ and its stalk at the empty flat.
\end{remark}

\section{Proof of the main results}

The flats of $\K_n$ are in bijection with partitions of $[n]$: a partition $Q$ given by $[n] = S_1 \sqcup \cdots \sqcup S_k$ is identified with the flat $F_Q$ corresponding to the subgraph that consists of all edges where both vertices lie in $S_i$ for some $i$. The simplification of the contraction $\K_n/F_Q$ is the braid matroid on $[k]$, the set of parts in $Q$. The stabilizer in $\mathfrak{S}_n$ of $F_Q$ is the set of permutations which preserve $Q$.

\begin{proposition}\label{prop:simplify}
    Let $\Gamma$ be a group acting on a loopless matroid. Then the subgroup $N$ of $\Gamma$ fixing all of the parallel classes is normal, and $P^{\Gamma}_{\M}(t)$ is the pullback from $\operatorname{VRep}(\Gamma/N)[t]$ of $P^{\Gamma/N}_{\overline{\M}}(t)$. 
\end{proposition}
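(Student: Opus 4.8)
The plan is to prove both assertions simultaneously by induction on $\rk(\M)$, exploiting the uniqueness characterization (i)--(iii) of the equivariant Kazhdan--Lusztig polynomial. First, for normality: since $\Gamma$ acts by matroid automorphisms it preserves the rank function, hence the parallelism relation, and therefore permutes the parallel classes of $\M$. This yields a homomorphism $\Gamma \to \Aut(\overline{\M})$ (it preserves $\rk_{\overline{\M}}$), and its kernel is precisely the set $N$ of elements fixing every parallel class setwise; thus $N$ is normal and $\Gamma/N$ acts on $\overline{\M}$. Write $\pi\colon \Gamma \to \Gamma/N$ and let $\pi^{*}\colon \VRep(\Gamma/N) \to \VRep(\Gamma)$ be inflation, applied coefficientwise to polynomials. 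I would record that $\pi^{*}$ is an injective ring homomorphism preserving the degree and palindromicity of a polynomial, because inflation carries distinct irreducibles of $\Gamma/N$ to distinct irreducibles of $\Gamma$ (an $N$-trivial module has the same $\Gamma$- and $\Gamma/N$-submodules).

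For the inductive step I would use the characterization in the form: given the equivariant Kazhdan--Lusztig polynomials of all \emph{proper} contractions, $P^{\Gamma}_{\M}$ is the \emph{unique} polynomial $Q$ with $\deg Q < \tfrac12\rk(\M)$ such that $Q(t) + \sum_{[F]\ne[\hat 0]} t^{\rk F}\,\Ind_{\Gamma_F}^{\Gamma} P^{\Gamma_F}_{\M/F}(t)$ is palindromic of degree $\rk(\M)$. Taking $Q = \pi^{*}P^{\Gamma/N}_{\overline{\M}}$, which has degree $<\tfrac12\rk(\overline{\M}) = \tfrac12\rk(\M)$, it then suffices to prove the key identity
\[
\sum_{[F]\ne[\hat 0]} t^{\rk F}\,\Ind_{\Gamma_F}^{\Gamma} P^{\Gamma_F}_{\M/F}(t) \;=\; \pi^{*}\!\!\sum_{[\overline F]\ne[\hat 0]} t^{\rk \overline F}\,\Ind_{(\Gamma/N)_{\overline F}}^{\Gamma/N} P^{(\Gamma/N)_{\overline F}}_{\overline{\M}/\overline F}(t),
\]
since the right-hand side plus $Q$ equals $\pi^{*}Z^{\Gamma/N}_{\overline{\M}}(t)$, which is palindromic of degree $\rk(\M)$, forcing $Q = P^{\Gamma}_{\M}$.

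To prove this identity I would use the $\Gamma$-equivariant lattice isomorphism $\mathcal{L}(\M) \cong \mathcal{L}(\overline{\M})$, $F \mapsto \overline{F}$, under which $\rk F = \rk \overline F$ and the orbit sets correspond. Since $F$ is a union of parallel classes, $N \subseteq \Gamma_F$, so $\Gamma_F/N = (\Gamma/N)_{\overline F}$. Combined with the standard compatibility of induction and inflation, $\Ind_{\Gamma_F}^{\Gamma}\circ \pi_F^{*} = \pi^{*}\circ \Ind_{(\Gamma/N)_{\overline F}}^{\Gamma/N}$ for $N \le \Gamma_F \le \Gamma$, the identity reduces to the per-flat statement
\[
P^{\Gamma_F}_{\M/F}(t) \;=\; \pi_F^{*}\, P^{\Gamma_F/N}_{\overline{\M}/\overline F}(t) \qquad \text{for } \rk F > 0,
\]
where $\pi_F\colon \Gamma_F \to \Gamma_F/N$ is the restriction of $\pi$.

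Finally I would establish this per-flat statement by applying the inductive hypothesis twice, both to matroids of rank $\rk(\M)-\rk(F) < \rk(\M)$. Applied to $\Gamma_F \curvearrowright \M/F$ it gives $P^{\Gamma_F}_{\M/F} = \operatorname{infl} P^{\Gamma_F/N_1}_{\overline{\M/F}}$, where $N_1 \trianglelefteq \Gamma_F$ is the subgroup fixing all parallel classes of $\M/F$; applied to $(\Gamma/N)_{\overline F} \curvearrowright \overline{\M}/\overline F$ it gives $P^{\Gamma_F/N}_{\overline{\M}/\overline F} = \operatorname{infl} P^{(\Gamma_F/N)/(N_1/N)}_{\overline{\overline{\M}/\overline F}}$. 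Using $\overline{\overline{\M}/\overline F} = \overline{\M/F}$ together with $(\Gamma_F/N)/(N_1/N) \cong \Gamma_F/N_1$, and the transitivity of inflation, both displayed polynomials equal the common inflation of $P^{\Gamma_F/N_1}_{\overline{\M/F}}$, yielding the statement. The main obstacle here is the compatibility of simplification with contraction at the level of group actions: one must verify that the parallel classes of $\overline{\M}/\overline F$ are exactly the images of the parallel classes of $\M/F$, so that the subgroup $N_1 \subseteq \Gamma_F$ fixing the latter descends to the subgroup $N_1/N \subseteq \Gamma_F/N$ fixing the former, i.e. $\overline{N_1} = N_1/N$. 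This bookkeeping of which subgroup fixes which parallel classes, carried along the contraction, is the delicate part of the argument; everything else is the uniqueness of Kazhdan--Lusztig--Stanley polynomials and formal properties of inflation and induction.
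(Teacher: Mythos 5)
Your proof is correct and takes essentially the same approach as the paper: the paper's two-line proof likewise obtains normality of $N$ as the kernel of the map from $\Gamma$ to the symmetric group on the parallel classes, and deduces the second claim ``by induction on the size of the ground set and the fact that simplifying a matroid does not change its lattice of flats.'' Your argument is simply that sketch written out in full detail --- induction on rank rather than ground-set size is immaterial, and the uniqueness characterization, the inflation/induction compatibility, and the per-flat bookkeeping you carry out are exactly what the paper's appeal to the invariance of $\mathcal{L}(\M)$ under simplification implicitly requires.
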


\begin{proof}
    Any automorphism of a matroid sends parallel classes to parallel classes, so there is a map from $\Gamma$ to the symmetric group on the set of parallel classes whose kernel is $N$. 
    
    The second part follows from induction on the size of the ground set and the fact that simplifying a matroid does not change its lattice of flats. 
\end{proof}

We now prove Theorem~\ref{thm:main-equivariant} from which Theorem~\ref{thm:main-non-equivariant} follows. Our strategy is to use the following observation (see, e.g., \cite[Corollary~A.5]{braden-huh-matherne-proudfoot-wang2}): Let $\Gamma$ be a finite group acting on a matroid $\M$, and let $P^{\Gamma}(t) \in \operatorname{VRep}(\Gamma)$ be a polynomial of degree less than $\frac{1}{2}\rk(\M)$. Suppose that
$$Z^{\Gamma}(t) := P^{\Gamma}(t) + \sum_{\varnothing \not= [F] \in \mathcal{L}(\M)/\Gamma}  t^{\rk(F)} \operatorname{Ind}_{\Gamma_F}^{\Gamma} P_{\M/F}^{\Gamma_F}(t)$$
is palindromic. Then $Z^{\Gamma}(t) = Z^{\Gamma}_{\M}(t)$ and $P^{\Gamma}(t) = P^{\Gamma}_{\M}(t)$. 

\begin{proof}[Proof of Theorem~\ref{thm:main-equivariant}]
We induct on $n$. Let $\Gamma = \mathfrak{S}_{n-1}$ be the subgroup of $\mathfrak{S}_n$ that fixes $n$. Let $P^{\Gamma} \in \operatorname{VRep}(\Gamma)[t]$ be the generating function of the permutation representation of simple quasi series-parallel matroids of rank $n - 1 - i$ on $[n-1]$, and let $Z^{\Gamma}$ be the generating function of the permutation representation of quasi series-parallel matroids of rank $n - 1 - i$ on $[n-1]$. By Proposition~\ref{prop:rankbound}, the degree of $P^{\Gamma}$ is less than $\frac{n-1}{2}$. Because the dual of a quasi series-parallel matroid is quasi series-parallel, $Z^{\Gamma}$ is palindromic. Therefore it suffices to show that
\begin{equation}\label{eq:key}
Z^{\Gamma}(t) = P^{\Gamma}(t) + \sum_{\varnothing \not= [F] \in \mathcal{L}(\K_n)/\mathfrak{S}_{n-1}}  t^{\rk(F)} \operatorname{Ind}_{\Gamma_F}^{\Gamma} P_{\K_n/F}^{\Gamma_F}(t).
\end{equation}

Let $Q = S_1 \sqcup \dotsb \sqcup S_k$ be a partition of $[n]$ with $n \in S_i$, and let $\M$ be a simple quasi series-parallel matroid on $[k] \setminus i$ of rank $k - 1 - j$. The quotient of $\Gamma_{F_Q}$ by the normal subgroup which fixes each parallel class of $\K_n/F_Q$ is a subgroup of $\mathfrak{S}_{[k] \setminus i}$. Let $V_{\M}$ be the pullback of the permutation representation on the set of matroids on $[k] \setminus i$ isomorphic to $\M$ to $\Gamma_{F_Q}$ from $\mathfrak{S}_{[k] \setminus i}$.
By induction and Proposition~\ref{prop:simplify}, $V_{\M}$ is a summand of $[t^j]P_{\K_n/F_Q}^{\Gamma_{F_Q}}(t)$. The stabilizer of $\M$ is the inverse image in $\Gamma_{F_Q}$ of the automorphism group of $\M$. 

Let $\widetilde{\M}$ denote the matroid on $[n-1]$ which has $S_i \setminus n$ as a set of loops, each $S_\ell$ is a parallel class ($\ell \not= i$), and simplifies to $\M$. 
The automorphism group of $\widetilde{\M}$ is the inverse image in $\Gamma_{F_Q}$ of the automorphism group of $\M$, so the induction of $V_{\M}$ to $\mathfrak{S}_{n-1}$ can be identified with the permutation representation on the set of matroids on $[n-1]$ which are isomorphic to $\widetilde{\M}$. Note that $\rk(F_Q) + k - 1 - j = n - 1 - j$, so the permutation representation on the set of matroids on $[n-1]$ isomorphic to $\widetilde{\M}$ appears as a summand in degree $n - 1 - \operatorname{rk}(\widetilde{\M})$. Furthermore, every quasi series-parallel matroid is either simple or simplifies to a simple quasi series-parallel matroid on a smaller ground set, which proves \eqref{eq:key}.
\end{proof}

As a consequence of the main result, we can use the generating functions described in Propositions~\ref{prop:gen-fun-A} and \ref{prop:gen-fun-S} to compute $Z$-polynomials and Kazhdan--Lusztig polynomials of braid matroids.

\begin{corollary}
    The bivariate generating functions $A(x,y)$ and $S(x,y)$ defined by equations \eqref{eq:A-gen-fun} and \eqref{eq:S-gen-fun} satisfy
    \begin{align*}
        {n!}[x^n]\, A(x,y) &= Z_{\mathsf{K}_{n+1}}(y),\\
        {n!}[x^n]\, S(x,y) &= y^{n} P_{\mathsf{K}_{n+1}}(1/y).
    \end{align*}
\end{corollary}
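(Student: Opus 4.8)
The plan is to unwind the definitions of the generating functions $A(x,y)$ and $S(x,y)$ and match their coefficients against Theorem~\ref{thm:main-non-equivariant}. Recall that $A(x,y) = \sum_{n\ge 1}\sum_{k} |\mathcal{A}(n,k)| \frac{x^n}{n!} y^k$, so that $n!\,[x^n] A(x,y) = \sum_k |\mathcal{A}(n,k)| y^k$. The first identity therefore amounts to checking that $\sum_k |\mathcal{A}(n,k)| y^k = Z_{\mathsf{K}_{n+1}}(y)$. By Theorem~\ref{thm:main-non-equivariant}, the coefficient of $t^i$ in $Z_{\mathsf{K}_{n+1}}(t)$ is $|\mathcal{A}(n, n-i)|$, i.e.\ writing $k = n - i$ we get $Z_{\mathsf{K}_{n+1}}(y) = \sum_{i} |\mathcal{A}(n, n-i)| y^i = \sum_{k} |\mathcal{A}(n,k)| y^{n-k}$. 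So the two expressions do \emph{not} agree as written—there is an index reversal—and the first thing I would do is verify that the palindromicity of $Z_{\mathsf{K}_{n+1}}(t)$ (guaranteed by property~(iii) of the $Z$-polynomial, with degree $\rk(\mathsf{K}_{n+1}) = n$) reconciles them.

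Concretely, $Z_{\mathsf{K}_{n+1}}(t)$ is palindromic of degree $n$, so $Z_{\mathsf{K}_{n+1}}(y) = y^n Z_{\mathsf{K}_{n+1}}(1/y)$. Applying this to the expression $\sum_k |\mathcal{A}(n,k)| y^{n-k}$ coming from Theorem~\ref{thm:main-non-equivariant} turns it into $\sum_k |\mathcal{A}(n,k)| y^{k}$, which is exactly $n!\,[x^n] A(x,y)$. Thus the first identity follows from combining the coefficient extraction with the palindromicity built into the definition of $Z$. I would state this chain of equalities cleanly, being careful that the sum over $k$ ranges appropriately (from $0$ or $1$ up to $n$) and that the reindexing $i \leftrightarrow n-k$ is handled correctly.

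For the second identity, the same bookkeeping applies but now with the simple quasi series-parallel matroids and without palindromicity (since $P_{\mathsf{K}_{n+1}}$ is not palindromic). Here $n!\,[x^n] S(x,y) = \sum_k |\mathcal{S}(n,k)| y^k$. By Theorem~\ref{thm:main-non-equivariant}, $[t^i] P_{\mathsf{K}_{n+1}}(t) = |\mathcal{S}(n, n-i)|$, so $P_{\mathsf{K}_{n+1}}(t) = \sum_i |\mathcal{S}(n,n-i)| t^i$ and hence $y^n P_{\mathsf{K}_{n+1}}(1/y) = \sum_i |\mathcal{S}(n,n-i)| y^{n-i} = \sum_k |\mathcal{S}(n,k)| y^k$ after the substitution $k = n-i$. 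This matches $n!\,[x^n]S(x,y)$ directly; the factor $y^n$ and the evaluation at $1/y$ in the statement are precisely what implements the index reversal $i \mapsto n-i$ that relates the $P$-coefficients to the $\mathcal{S}$-enumeration.

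The main (and only real) obstacle is index-reversal bookkeeping: the generating functions $A$ and $S$ are organized by rank $k$, whereas the $Z$- and Kazhdan--Lusztig polynomials are naturally organized by the degree $i$ of $t$, and Theorem~\ref{thm:main-non-equivariant} relates these via $i = (n-1) - \text{(rank on $[n-1]$)}$—note the shift from $n+1$ in the braid matroid to $n$ in the matroid enumeration, since $\K_{n+1}$ has rank $n$ and corresponds to matroids on $[n]$. I would double-check this off-by-one between the subscript $n+1$ of the braid matroid and the ground set $[n]$ of the enumerated matroids, confirm that $\rk \K_{n+1} = n$, and make sure the palindromic transformation is invoked only for the $Z$-polynomial identity and not for the non-palindromic Kazhdan--Lusztig identity, where the explicit $y^n\,(\cdot)(1/y)$ already does the reversal. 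No deep input beyond Theorem~\ref{thm:main-non-equivariant} and the definitional palindromicity of $Z$ is needed.
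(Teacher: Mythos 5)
Your proposal is correct and is essentially the argument the paper intends: the corollary is stated as an immediate consequence of Theorem~\ref{thm:main-non-equivariant}, obtained precisely by the coefficient unwinding you describe. Your handling of the index reversal---invoking the palindromicity of $Z_{\mathsf{K}_{n+1}}$ (equivalently, the rank-reversing duality on quasi series-parallel matroids) for the first identity, while noting that the explicit $y^{n}P_{\mathsf{K}_{n+1}}(1/y)$ already performs the reversal in the second---is exactly the right bookkeeping, including the shift between the ground set $[n]$ and the braid matroid $\mathsf{K}_{n+1}$ of rank $n$.
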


A very short piece of SAGE code implementing these two generating functions is included on the website of the first author\footnote{Accessible at \url{https://sites.google.com/view/ferroniluis/home/research} or by clicking \href{https://drive.google.com/uc?export=view&id=17rbTR30bF9EL1Q9-t0MDJyMDy9JIbU_O}{here}.}.

\section*{Acknowledgments}

The authors want to thank the OEIS \cite{sloane} for invaluable help throughout this project. We thank the referees for a thorough reading and helpful comments. The first author is supported by the Swedish Research Council, Grant 2018-03968. The second author is supported by an NDSEG fellowship.

\bibliographystyle{amsalpha}
\bibliography{bibliography}

\end{document}